\definecolor{coolblack}{rgb}{0.0, 0.0, 0.230}
\newcommand{\nll}{\centernot{\ll}}
\newtheorem{definition}{Definition}
\newtheorem{proposition}{Proposition}
\newtheorem{theorem}{Theorem}
\newtheorem*{theorem*}{Theorem}
\newtheorem{lemma}{Lemma}
\newtheorem{corollary}{Corollary}
\newcommand{\dvert}{\mathbin{\|}}
\newcommand{\bP}{\bm{\mathrm{P}}}
\newcommand{\bQ}{\bm{\mathrm{Q}}}
\renewcommand{\P}{\mathcal{P}}
\newcommand{\R}{\mathbb{R}} 
\newcommand{\N}{\mathcal{N}}
\newcommand{\E}{\mathop{\mathbb{E}}}
\newcommand{\id}{\mathsf{id}}
\renewcommand{\part}[2]{\frac{\partial #1}{\partial #2}}
\newcommand{\Ent}{\mathsf{Ent}}
\newcommand{\KL}{\mathsf{KL}}
\newcommand{\FI}{\mathsf{FI}}
\newcommand{\D}{\,\mathrm{d}}
\newcommand{\law}{\mathsf{law}}
\newcommand{\Lip}{\mathsf{Lip}}
\newcommand{\cO}{\mathcal{O}}
\newcommand{\sfD}{\mathsf{D}}
\newcommand{\prox}{\textnormal{\!prox}}
\newcommand{\PSI}{\Phi \mathsf{SI}}
\newcommand{\Dir}{\mathcal{E}}
\title{Fast Convergence of $\Phi$-Divergence Along the\\Unadjusted Langevin Algorithm and Proximal Sampler}
\author{Siddharth Mitra\thanks{Department of Computer Science, Yale University. \texttt{siddharth.mitra@yale.edu}}
\and Andre Wibisono\thanks{Department of Computer Science, Yale University. \texttt{andre.wibisono@yale.edu}}
}
\date{}
\begin{document}

\maketitle

\begin{abstract}

We study the mixing time of two popular discrete-time Markov chains in continuous space, the Unadjusted Langevin Algorithm and the Proximal Sampler, which are discretizations of the Langevin dynamics. We extend mixing time analyses for these Markov chains to hold in $\Phi$-divergence. We show that any $\Phi$-divergence arising from a twice-differentiable strictly convex function $\Phi$ converges to $0$ exponentially fast along these Markov chains, under the assumption that their stationary distributions satisfy the corresponding $\Phi$-Sobolev inequality, which holds for example when the target distribution of the Langevin dynamics is strongly log-concave. Our setting includes as special cases popular mixing time regimes, namely the mixing in chi-squared divergence under a Poincaré inequality, and the mixing in relative entropy under a log-Sobolev inequality. Our results follow by viewing the sampling algorithms as noisy channels and bounding the contraction coefficients arising in the appropriate strong data processing inequalities.

\end{abstract}

\section{Introduction}\label{sec:Introduction}

Sampling from a probability distribution is a fundamental task that appears in many fields, including machine learning, statistics, and Bayesian inference \cite{gelman1995bayesian, mackay2003information, robert1999monte, von2011bayesian, johannes2003mcmc}.
Suppose we wish to obtain samples from a continuous probability distribution $\nu \propto \exp{(-f)}$ on $\R^d$; a common approach is to construct a Markov chain which admits $\nu$ as its invariant or stationary distribution, and then draw samples after some initial burn-in time.
A rigorous study of burn-in time~\cite{gilks1995introducing, geyer2011introduction} leads one to analyze the \emph{mixing time} of a Markov chain, which tracks how quickly the Markov chain converges to its stationary distribution~\cite{levin2017markov}. Different choices of distances and statistical divergences between probability distributions lead to different guarantees, which can be bounded in terms of each other, to yield interesting results and bounds.

A general and well-studied family of divergences are that of $\Phi$-divergences or $f$-divergences~\cite{Csiszar1967SSMH}, which include many popular divergences such as Kullback-Leibler (KL) divergence, chi-squared divergence, Total Variation (TV) distance, and squared-Hellinger distance.
For any convex function $\Phi : \R_{\geq 0} \to \R$ with $\Phi(1) =0$, the $\Phi$-divergence between probability distributions $\mu$ and $\nu$ such that $\mu \ll \nu$ is defined by
\begin{equation}\label{eq:PhiDivergence}
\sfD_{\Phi}(\mu \dvert \nu) \coloneqq \E_\nu \left[\Phi\left(\frac{\mu}{\nu}\right)\right],
\end{equation}
and it is $+\infty$ if $\mu \nll \nu$.
For example, the KL divergence or relative entropy corresponds to $\Phi(x) = x \log x$, chi-squared divergence corresponds to $\Phi(x) = (x-1)^2$, TV distance corresponds to $\Phi(x) = \frac{1}{2}|x-1|$, and squared-Hellinger distance corresponds to $\Phi(x) = \frac{1}{2}(\sqrt{x}-1)^2$.
In general, $\Phi$-divergences do not satisfy a triangle inequality, but some examples which do are the TV distance and Marton's divergence~\cite[(72)]{sason2016f}.
Further examples of $\Phi$-divergences can be found in Table~\ref{table:PhiExamples} in Appendix~\ref{app:ExamplesOfPhiDiv}.
The family of $\Phi$-divergences have found profound applications from hypothesis and distribution testing~\cite{pensia2024sample, gyorfi2002asymptotic, gretton2008nonparametric}, and neuroscience~\cite{nemenman2004entropy, belitski2008local}, to reinforcement learning~\cite{ho2022robust, panaganti2024model}. We study $\Phi$-divergences in the context of mixing time for Markov chains, and extend the analysis from KL divergence\footnote{And other specific analyses such as in chi-squared divergence.} to $\Phi$-divergence.

We study the mixing time of Markov chains in $\Phi$-divergence under the assumption that their stationary distribution satisfy a $\Phi$-Sobolev inequality. These inequalities include as special cases popular isoperimetric inequalities such as the log-Sobolev inequality (LSI) and Poincaré inequality, and are defined as follows.

\begin{definition}\label{def:PhiSobolevInequality}
    A probability distribution $\nu$ satisfies a $\Phi$-Sobolev inequality $(\PSI)$ with constant $\alpha > 0$ if for all probability distributions $\mu \ll \nu$\,, we have
    \begin{equation}\label{eq:PhiSI_DistributionBased}
        2 \alpha \, \sfD_\Phi (\mu \dvert \nu) \leq \FI_\Phi (\mu \dvert \nu),
    \end{equation}
    where $\sfD_{\Phi} (\mu \dvert \nu)$ is the $\Phi$-divergence defined in~\eqref{eq:PhiDivergence}, and $\FI_\Phi (\mu \dvert \nu)$ is the $\Phi$-Fisher information defined by
    \begin{align*}
        \FI_\Phi (\mu \dvert \nu) &\coloneqq  \E_{\nu} \left[ \left\|\nabla \frac{\mu}{\nu}\right\|^2 \Phi''\left( \frac{\mu}{\nu} \right) \right].
    \end{align*}
    We define the $\Phi$-Sobolev constant of $\nu$ to be the optimal (largest) constant $\alpha$ such that the above inequality holds:
    \begin{equation}\label{eq:PhiSI_Constant}
        \alpha_{\PSI}(\nu) := \inf_{\mu} \frac{\FI_\Phi (\mu \dvert \nu)}{2\sfD_\Phi (\mu \dvert \nu)}
    \end{equation}
    where the infimum is taken over all probability distributions $\mu$ with $0 < \sfD_\Phi(\mu \dvert \nu) < \infty$.
\end{definition}

The inequality~\eqref{eq:PhiSI_DistributionBased} is equivalent to saying that for all smooth functions $g : \R^d \to \R_{\ge 0}$ with $\E_\nu[g] = 1$,
    \begin{equation}\label{eq:PhiSI_FunctionBased}
        2 \alpha \, \Ent_{\Phi}^{\nu} (g) \leq \Dir_\Phi^\nu(g)\,,
    \end{equation}
    where 
    \begin{align*}\label{eq:PhiEntropy}
        \Ent_{\Phi}^{\nu} (g) \coloneqq \E_{\nu} [\Phi (g)] - \Phi (\E_{\nu}[g]) ~\text{ \,\,and\,\, }~ \Dir_\Phi^\nu(g) := \E_\nu\left[\|\nabla g\|^2 \Phi''(g)\right].
    \end{align*}
    To see this, take $g$ to be the density function of $\mu$ with respect to $\nu$. 

For $\Phi(x) = x \log x$, the $\Phi$-divergence is the KL divergence and the $\Phi$-Sobolev inequality is the log-Sobolev inequality. For $\Phi(x) = (x-1)^2$, the $\Phi$-divergence is the chi-squared divergence and the $\Phi$-Sobolev inequality is the Poincaré inequality. Further examples are mentioned in Table~\ref{table:PhiExamples} in Appendix~\ref{app:ExamplesOfPhiDiv}. 
The Poincaré inequality is the weakest $\Phi$-Sobolev inequality in that it is implied by any other $\Phi$-Sobolev inequality~\cite[Section~2.2]{chafai2004entropies}. This extends the more well-known statement that the log-Sobolev inequality implies the Poincaré inequality.
Additionally, one can start from a $\Phi$-Sobolev inequality and show that it satisfies a $\Phi$-Sobolev inequality for $\Phi(x) = x^p-1$ for $p \in (1,2]$ but without a tight constant~\cite[Section~2.2]{chafai2004entropies}. This class of $\Phi$-Sobolev inequalities are related to Beckner inequalities~\cite[Section~7.6.2]{BGL14}.

We discuss how the $\Phi$-Sobolev constant of a distribution evolves along various operations such as convolutions and pushforwards in Section~\ref{subsec:Prelim_PhiSobolev}. These properties will be crucial when analyzing mixing time.
We now introduce the Markov chains we will study.

\subsection{Langevin dynamics}

The Langevin dynamics to sample from $\nu \propto \exp{(-f)}$ on $\R^d$ is the following stochastic differential equation (SDE):
\begin{equation}\label{eq:LangevinDynamicsSDE}
    \D X_t = -\nabla f(X_t) \D t + \sqrt{2} \D W_t\,,
\end{equation}
where $W_t$ is the standard Brownian motion on $\R^d$. The Langevin dynamics admits $\nu$ as the stationary or invariant distribution, and hence is a natural process to study for sampling~\cite{BGL14}.
It also has the natural optimization interpretation as the gradient flow to minimize KL divergence in the space of probability distributions over $\R^d$ with the Wasserstein $W_2$ metric, see~\cite{JKO98,wibisono2018sampling}.

The Langevin dynamics is a continuous-time Markov process, and it needs to be discretized in time in order to implement in practice. We will focus on two discretizations of the Langevin dynamics -- the Unadjusted Langevin Algorithm (ULA), which has been well-studied in~\cite{roberts1996exponential, roberts1998optimal, dalalyan2017further, cheng2018convergence, VW19, chewietal2022lmcpoincare}, and the Proximal Sampler, which has been studied in~\cite{lee2021structured, chen2022improved,yuan2023class, kook2024inandout, kook2024r}.

The convergence of $\Phi$-divergence along Langevin dynamics is easy to establish (Corollary~\ref{cor:PhiConvergenceAlongLangevin}) and the properties of $\Phi$-divergence along continuous-time dynamics are well-studied~\cite{chafai2004entropies, dolbeault2018varphi, achleitner2015large}. We study the convergence of $\Phi$-divergence for discrete-time samplers in Theorems~\ref{thm:ULA_PhiMixing} and~\ref{thm:ProximalMain}.

\subsection{Unadjusted Langevin algorithm}\label{subsec:Introduction_ULA}

The Unadjusted Langevin Algorithm (ULA) is a simple discretization of the Langevin dynamics~\eqref{eq:LangevinDynamicsSDE} and is given by the following update
\begin{equation}\label{eq:ULA_RandomVariableUpdate}
    X_{k+1} = X_k - \eta \nabla f(X_k) + \sqrt{2 \eta} Z_k\,,
\end{equation}
where $Z_k \sim \N(0, I)$ and $\eta > 0$ is the stepsize. It is well-known~\cite{roberts1996exponential} that the ULA is a biased discretization, which means that it admits as its stationary distribution $\nu^\eta \neq \nu$ for all $\eta > 0$; furthermore, as $\eta k \to t$ and $\eta \to 0$, $\nu^\eta \to \nu$ and ULA~\eqref{eq:ULA_RandomVariableUpdate} recovers the Langevin dynamics~\eqref{eq:LangevinDynamicsSDE}. The ULA has been widely studied, and we now have guarantees on its mixing time in numerous settings:~\cite{dalalyan2017further} studies the mixing time to $\nu$ in Wasserstein $W_2$ distance under $\nu$ being strongly log-concave;~\cite{cheng2018convergence} study the mixing time to $\nu$ in KL divergence under $\nu$ being strongly log-concave;~\cite{VW19} study mixing to $\nu$ in KL divergence under an LSI assumption on $\nu$, and also study mixing to the biased limit $\nu^{\eta}$ in Rényi divergence under an LSI assumption on $\nu^\eta$;~\cite{altschuler23resolving} study mixing to $\nu^{\eta}$ under a strong log-concavity assumption on $\nu$; and~\cite{chewietal2022lmcpoincare} study mixing to $\nu$ under modified LSI assumptions on $\nu$. We refer the reader to~\cite{chewi2023log} for a comprehensive overview of recent mixing time results.

A probability distribution $\nu \propto \exp{(-f)}$ is said to be $L>0$ smooth if $-LI \preceq \nabla^2 f \preceq LI$, and all of the aforementioned works make smoothness assumptions on the target distribution $\nu$. This is common for discrete-time analysis.
In Theorem~\ref{thm:ULA_PhiMixing}, we show mixing guarantees for ULA in $\Phi$-divergence for smooth target distributions $\nu$ and under the assumption that the stationary distribution $\nu^{\eta}$ of ULA satisfies the corresponding $\Phi$-Sobolev inequality (Definition~\ref{def:PhiSobolevInequality}). 
To the best of our knowledge, we are the first to study mixing time guarantees in any $\Phi$-divergence for ULA.
We prove Theorem~\ref{thm:ULA_PhiMixing} in Section~\ref{subsec:ProofOfULAMain}.

\begin{theorem}\label{thm:ULA_PhiMixing}
    Suppose the stationary distribution $\nu^{\eta}$ of ULA satisfies a $\Phi$-Sobolev inequality with optimal constant $\alpha>0$, and $\nu$ is $L$-smooth for some $0 < \alpha \leq L < \infty$. 
    Let $X_k \sim \rho_k$ evolve following ULA~\eqref{eq:ULA_RandomVariableUpdate} with step size $0 < \eta \leq 1/L$ from $X_0 \sim \rho_0$.
    Then for all $k \ge 1$,
    \begin{equation}\label{eq:ULA_MainMixingResult}
        \sfD_{\Phi}(\rho_k \dvert \nu^{\eta}) \leq \left( 1 + \frac{2\alpha \eta}{(1+\eta L)^2}  \right)^{-k} \sfD_{\Phi}(\rho_0 \dvert \nu^{\eta}).
    \end{equation}
\end{theorem}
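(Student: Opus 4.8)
The plan is to prove a one-step contraction and then iterate it $k$ times, following the strong data processing viewpoint advertised in the introduction. The starting point is the factorization of a single ULA step into a deterministic transport followed by Gaussian smoothing. Writing $T_\eta(x) \coloneqq x - \eta \nabla f(x)$ and letting $Q_\eta$ denote convolution with $\N(0, 2\eta I)$ (the heat semigroup run for time $\eta$), one step of \eqref{eq:ULA_RandomVariableUpdate} sends a law $\rho$ to $\big((T_\eta)_\#\rho\big)Q_\eta$. Applying this to $\rho_k$, and to $\nu^\eta$ via stationarity, gives $\rho_{k+1} = \big((T_\eta)_\#\rho_k\big)Q_\eta$ and $\nu^\eta = \big((T_\eta)_\#\nu^\eta\big)Q_\eta$. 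It therefore suffices to bound the contraction coefficient (the strong data processing constant) of the Gaussian channel $Q_\eta$ acting on the two pushed-forward measures.

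First I would dispose of the deterministic step. Since $\eta \leq 1/L$ and $-LI \preceq \nabla^2 f \preceq LI$, the Jacobian $DT_\eta = I - \eta\nabla^2 f$ satisfies $0 \preceq DT_\eta$ and $\|DT_\eta\|_{\op} \leq 1+\eta L$; in particular $T_\eta$ is injective, hence a diffeomorphism onto its image. As $\Phi$-divergence is invariant under invertible maps, $\sfD_\Phi\big((T_\eta)_\#\rho_k \dvert (T_\eta)_\#\nu^\eta\big) = \sfD_\Phi(\rho_k \dvert \nu^\eta)$, so no contraction is gained or lost at this stage and $L$-smoothness enters only through the operator-norm bound $\|DT_\eta\|_{\op} \leq 1+\eta L$, which I will use to transport functional inequalities across $T_\eta$.

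Next I would analyze the Gaussian channel by interpolating along the heat flow. For $s \in [0,\eta]$ set $\mu_s \coloneqq \big((T_\eta)_\#\rho_k\big)Q_s$ and $\pi_s \coloneqq \big((T_\eta)_\#\nu^\eta\big)Q_s$, so that $\mu_\eta = \rho_{k+1}$ and $\pi_\eta = \nu^\eta$, and both densities solve the heat equation $\partial_s = \Delta$. A direct integration by parts, in which the cross terms cancel precisely because $\mu_s$ and $\pi_s$ evolve under the same equation, yields the dissipation identity
\begin{equation*}
\frac{\D}{\D s}\,\sfD_\Phi(\mu_s \dvert \pi_s) = -\,\cI_\Phi(\mu_s \dvert \pi_s), \qquad \cI_\Phi(\mu \dvert \pi) \coloneqq \int \Phi''\!\left(\tfrac{\D\mu}{\D\pi}\right)\left|\nabla \tfrac{\D\mu}{\D\pi}\right|^2 \D\pi,
\end{equation*}
where $\cI_\Phi$ is the $\Phi$-Fisher information, i.e. the Dirichlet form attached to the $\Phi$-Sobolev inequality (Definition~\ref{def:PhiSobolevInequality}).

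Finally I would convert dissipation into decay using the $\Phi$-Sobolev inequality, which in Dirichlet-form notation reads $\cI_\Phi(\mu \dvert \pi) \geq 2\alpha\,\sfD_\Phi(\mu \dvert \pi)$ whenever $\pi$ satisfies it with constant $\alpha$. The hypothesis supplies this at the endpoint $\pi_\eta = \nu^\eta$, but the identity requires a lower bound on $\cI_\Phi(\mu_s \dvert \pi_s)$ at \emph{every} $s$. To obtain it, I would transport the inequality from $\nu^\eta$ onto the interpolation: pushing the $\Phi$-Sobolev inequality of $\nu^\eta$ through $T_\eta$ and controlling the induced distortion of $\cI_\Phi$ by the squared Jacobian bound $\|DT_\eta\|_{\op}^2 \leq (1+\eta L)^2$ shows that $\pi_0 = (T_\eta)_\#\nu^\eta$ obeys a $\Phi$-Sobolev inequality with constant $\alpha/(1+\eta L)^2$ — this is exactly where the factor $(1+\eta L)^2$ in the rate originates. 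Provided the inequality persists along the intermediate heat flow, one gets $\cI_\Phi(\mu_s \dvert \pi_s) \geq \frac{2\alpha}{(1+\eta L)^2}\,\sfD_\Phi(\mu_s \dvert \pi_s) \geq \frac{2\alpha}{(1+\eta L)^2}\,\sfD_\Phi(\mu_\eta \dvert \pi_\eta)$, the last step using that $\sfD_\Phi(\mu_s \dvert \pi_s)$ is nonincreasing in $s$ (since $\cI_\Phi \geq 0$). Integrating $\frac{\D}{\D s}\sfD_\Phi(\mu_s \dvert \pi_s) \leq -\frac{2\alpha}{(1+\eta L)^2}\sfD_\Phi(\mu_\eta \dvert \pi_\eta)$ over $[0,\eta]$ and rearranging gives the one-step bound, and iterating yields \eqref{eq:ULA_MainMixingResult}. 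The main obstacle is precisely this transfer of the $\Phi$-Sobolev inequality off the single measure $\nu^\eta$ onto the whole interpolation: the intermediate $\pi_s$ are not assumed to satisfy any functional inequality, so one must argue that it survives both the pushforward by $T_\eta$ (with the sharp $(1+\eta L)^{-2}$ loss) and the subsequent Gaussian smoothing, and the tightness of this transfer is what determines the final rate.
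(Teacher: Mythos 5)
Your overall architecture matches the paper's proof of Lemma~\ref{lem:ULA_Mixing_ContractionCoefficient}: factor one ULA step into the bijective map $T_\eta$ (across which $\sfD_\Phi$ is invariant) followed by Gaussian smoothing, derive the dissipation identity along the simultaneous heat flow, and transport the $\Phi$-Sobolev inequality of $\nu^\eta$ through $T_\eta$ at cost $(1+\eta L)^{-2}$. However, there is a genuine gap precisely where you flag ``the main obstacle,'' and the way you propose to use the transported inequality would not close it. You need $\pi_s$ to satisfy a $\PSI$ with the \emph{uniform} constant $\alpha/(1+\eta L)^2$ for every $s\in[0,\eta]$, but the $\PSI$ constant degrades under Gaussian convolution. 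The paper quantifies this degradation via Lemma~\ref{lem:PhiSIchangeAlongConvolution} together with Lemma~\ref{lem:PhiSISLC} applied to the $\tfrac{1}{2s}$-strongly log-concave Gaussian $\N(0,2sI)$, which yields only $\alpha_{\PSI}(\pi_s)\ge \alpha/\bigl((1+\eta L)^2+2\alpha s\bigr)$ --- strictly smaller than $\alpha/(1+\eta L)^2$ for every $s>0$. Persistence at the undegraded constant is not something you can expect to prove.

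This matters because your finishing move is not Gr\"onwall's lemma but the cruder endpoint bound $\sfD_\Phi(\mu_s\dvert\pi_s)\ge\sfD_\Phi(\mu_\eta\dvert\pi_\eta)$. With the (unjustified) uniform constant, that trick happens to produce the stated factor $\bigl(1+\tfrac{2\alpha\eta}{(1+\eta L)^2}\bigr)^{-1}$; with the correct $s$-dependent constant it gives only $\bigl(1+\log\bigl(1+\tfrac{2\alpha\eta}{(1+\eta L)^2}\bigr)\bigr)^{-1}$, which is weaker than the theorem. The paper's route fixes both issues simultaneously: apply Gr\"onwall to $\tfrac{\D}{\D s}\sfD_\Phi(\mu_s\dvert\pi_s)\le -2\,\alpha_{\PSI}(\pi_s)\,\sfD_\Phi(\mu_s\dvert\pi_s)$ with the time-dependent constant, and the resulting factor $\exp\bigl(-\int_0^\eta 2\alpha_{\PSI}(\pi_s)\,\D s\bigr)$ evaluates exactly to $\tfrac{(1+\eta L)^2}{(1+\eta L)^2+2\alpha\eta}$, the logarithm produced by the integral cancelling against the exponential. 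So the concrete repair is: replace the uniform-persistence assumption by the convolution bound on $\alpha_{\PSI}(\pi_s)$, and replace the endpoint trick by Gr\"onwall; the rest of your argument then goes through as in the paper.
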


As $\eta k \to t$ and $\eta \to 0$, Theorem~\ref{thm:ULA_PhiMixing} recovers the correct $\exp{(-2\alpha t)}$ convergence rate for Langevin dynamics under a $\Phi$-Sobolev inequality (see Corollary~\ref{cor:PhiConvergenceAlongLangevin} in Section~\ref{subsec:Prelim_PhiSobolev}).
We show that the rate of Theorem~\ref{thm:ULA_PhiMixing} is tight for KL divergence via an explicit calculation for the case when $\nu$ is a Gaussian, see Proposition~\ref{prop:OU_ULA} in Appendix~\ref{app:OU_ULA}.

A common setting in which $\nu^{\eta}$ satisfies a $\Phi$-Sobolev inequality is when $\nu$ is strongly log-concave (see Lemma~\ref{lem:BiasedLimitProperty} in Section~\ref{subsec:PropBiased}). 
To the best of our knowledge, properties of the limiting distribution $\nu^\eta$ under assumptions on $\nu$ which are weaker than strong log-concavity remain unknown. 
Similar to how log-Sobolev and Poincaré inequalities are stable to bounded perturbations~\cite{Holley1987LogarithmicSI}, $\Phi$-Sobolev inequalities also enjoy similar properties~\cite[Section~3.3]{chafai2004entropies}.
Therefore, an interesting question one can ask is if the stationary distribution $\nu^{\eta}$ continues to satisfy a $\Phi$-Sobolev inequality when $\nu$ undergoes a suitable perturbation; we leave this for future work.

Theorem~\ref{thm:ULA_PhiMixing} shows convergence to the stationary distribution $\nu^\eta$. The bias $\sfD_\Phi (\nu^\eta \dvert \nu)$ between the stationary distribution and the target distribution depends on the choice of $\Phi$. For example, for the Ornstein-Uhlenbeck process where $\nu = \N(0, \frac 1 \alpha I)$, we have $\nu^\eta = \N(0, \frac{2}{\alpha(2-\eta \alpha)}I)$ (see Appendix~\ref{app:OU_ULA}). In this case, the bias has linear dependence in the dimension $d$ for KL divergence ($\Phi(x) = x \log x$) and exponential dependence in $d$ for chi-squared divergence ($\Phi(x) = (x-1)^2$).

\subsection{Proximal Sampler}\label{subsec:Introduction_Proximal}

While the simplicity of ULA is appealing, its biased limiting distribution makes it less ideal for applications requiring sampling with high accuracy.
The Proximal Sampler~\cite{lee2021structured, chen2022improved} is an alternative discretization that addresses this shortcoming. 
Given the close connections between optimization and sampling, along with the modern interpretation of sampling as optimization in the space of distributions~\cite{JKO98, wibisono2018sampling}, the Proximal Sampler as introduced in~\cite{lee2021structured} can be seen to be a sampling analogue of the proximal point algorithm in optimization~\cite{martinet1970breve, rockafellar1976monotone}. The Proximal Sampler was analyzed in great generality in~\cite{chen2022improved}, who show mixing guarantees in KL divergence, Rényi divergence, chi-square divergence, and $W_2$ distance, under isoperimetry or log-concavity assumptions on $\nu$.
In Theorem~\ref{thm:ProximalMain}, we show mixing guarantees for the Proximal Sampler in $\Phi$-divergence under the assumption that $\nu$ satisfies the corresponding $\Phi$-Sobolev inequality (Definition~\ref{def:PhiSobolevInequality}). Our results extend the mixing time of the Proximal Sampler to $\Phi$-divergence.

The Proximal Sampler considers an augmented $(X,Y)$ space $\R^d \times \R^d$ and performs Gibbs sampling on the joint space. We will use the appropriate superscripts to denote probability distributions on their respective spaces.\footnote{For example, $\rho^{XY}$ denotes a distribution on the joint space, $\rho^X$ refers to a distribution on the $X$ space, and $\rho^{Y \mid X=x}$ denotes a conditional distribution supported on the $Y$ space.}
Let the target distribution we wish to sample from be $\nu^X \propto \exp{(-f)}$ on $\R^d$. The joint target distribution on $\R^d \times \R^d$ is defined as:
\begin{equation}\label{eq:JointTargetForProximal}
\nu^{XY}(x,y) \propto \exp \left(-f(x) - \frac{\|x-y\|^2}{2\eta} \right),
\end{equation}
for step size $\eta >0$. The Proximal Sampler, initialized from $X_0 \sim \rho_0^X$, is the following two-step algorithm:
\begin{equation}\label{eqs:ProximalSampler}
    \begin{split}
        &\textbf{Step 1 (forward step)\,:} \text{\,\, Sample \,\,} Y_k \mid X_k \sim \nu^{Y \mid X = X_k} = \N(X_k, \eta I)\hspace{3.5cm}\\
        &\textbf{Step 2 (backward step)\,:} \text{\,\, Sample \,\,} X_{k+1} \mid Y_k \sim \nu^{X \mid Y = Y_k}
    \end{split}
\end{equation}
Note that $\nu^{XY}$ has the desired target distribution $\nu^X$ as the $X$ marginal, and that the $Y$ marginal $\nu^Y$ is a smoothed version of $\nu^X$, i.e., $\nu^Y = \nu^X * \N(0, \eta I)$. The forward step of the algorithm is easy to implement as it corresponds to drawing a Gaussian random variable. For the backward step, implementation is possible given access to a \emph{Restricted Gaussian Oracle} (RGO). A RGO is an oracle that, given any $y \in \R^d$, outputs a sample from $\nu^{X \mid Y=y}$, i.e. from
\[
\nu^{X \mid Y}(x \mid y) \propto_x \exp \left( -f(x)-\frac{\|x-y\|^2}{2\eta}   \right).
\]
Similar to~\cite{lee2021structured, chen2022improved}, in our main result for the Proximal Sampler (Theorem~\ref{thm:ProximalMain}), we consider an ideal implementation of the sampler where we have exact access to a RGO.
Specific cases where the RGO can be implemented efficiently are discussed in~\cite{lee2021structured}, and improved implementations of the RGO along with analysis of the Proximal Sampler with inexact RGO implementations have been an active area of research~\cite{fan2023improved, liang2022proximal, altschuler2024faster}. \cite{fan2023improved} show an implementation via approximate rejection sampling and \cite{altschuler2024faster} show an approach based on the Metropolis-adjusted Langevin algorithm.

We mention a basic approach to implement the RGO via rejection sampling in Appendix~\ref{app:RGO}. 
We also mention the oracle complexity (in this case, the expected number of calls to the first order oracle of $f$) when sampling using the Proximal Sampler with the rejection sampling based RGO implementation in Corollary~\ref{cor:ProximalWithRejectionSamplingRGO}.

When both steps are implemented exactly (i.e. given access to an exact RGO), the Proximal Sampler corresponds to Gibbs sampling from the stationary distribution $\nu^{XY}$, and the algorithm is therefore unbiased. Step 1 is referred to as the forward step as it corresponds to evolving along the (forward) heat flow, and step 2 is referred to as the backward step is it corresponds to backward heat flow; this perspective is further made clear in Sections~\ref{subsec:ProximalFwdStep} and~\ref{subsec:ProximalBwdStep}.
We now state the main theorem describing the mixing time of the Proximal Sampler in $\Phi$-divergence. We prove Theorem~\ref{thm:ProximalMain} in Section~\ref{subsec:ProximalMainThmProof}.

\begin{theorem}\label{thm:ProximalMain}
    Suppose $\nu^X$ satisfies a $\Phi$-Sobolev inequality with optimal constant $\alpha$. Let $X_k \sim \rho_k^X$ evolve along the Proximal Sampler~\eqref{eqs:ProximalSampler} with step size $\eta >0$ from $X_0 \sim \rho_0^X$. Then for all $k\geq 1$,
    \[
    \sfD_{\Phi}(\rho_k^X \dvert \nu^X) \leq \frac{\sfD_{\Phi}(\rho_0^X \dvert \nu^X)}{(1+\alpha \eta)^{2k}}\,.
    \]
\end{theorem}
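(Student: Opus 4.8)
The plan is to factor a single iteration of the proximal sampler into its two Markov kernels and to show that each contracts $\sfD_\Phi$ by the factor $\tfrac{1}{1+\alpha\eta}$ in the sense of a strong data processing inequality (SDPI); their composition then contracts by $\tfrac{1}{(1+\alpha\eta)^2}$ per step, and iterating over $k$ yields the theorem. Write $K_{\mathrm{fwd}}$ for the forward kernel $x\mapsto\N(x,\eta I)$ and $K_{\mathrm{bwd}}$ for the backward (RGO) kernel $y\mapsto\nu^{X\mid Y=y}$. One step sends $\rho_k^X$ to $\rho_k^Y\coloneqq\rho_k^X K_{\mathrm{fwd}}$ and then to $\rho_{k+1}^X\coloneqq\rho_k^Y K_{\mathrm{bwd}}$, while the Gibbs structure gives the stationarity relations $\nu^X K_{\mathrm{fwd}}=\nu^Y$ and $\nu^Y K_{\mathrm{bwd}}=\nu^X$ together with the detailed-balance identity $\nu^X(\D x)\,K_{\mathrm{fwd}}(x,\D y)=\nu^{XY}(\D x,\D y)=\nu^Y(\D y)\,K_{\mathrm{bwd}}(y,\D x)$; thus the two kernels are reverses of one another with respect to $\nu^{XY}$.

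\emph{Forward step.} Since $K_{\mathrm{fwd}}$ is Gaussian convolution, I would analyze it by simultaneous heat flow. Set $\rho_s\coloneqq\rho_k^X*\N(0,sI)$ and $\nu_s\coloneqq\nu^X*\N(0,sI)$ for $s\in[0,\eta]$, both solving $\partial_s u=\tfrac12\Delta u$, and let $h_s\coloneqq\D\rho_s/\D\nu_s$. An integration by parts (a de Bruijn-type identity, using $\nabla\rho_s=\nu_s\nabla h_s+h_s\nabla\nu_s$ and valid for twice-differentiable strictly convex $\Phi$) gives the dissipation $\frac{\mathrm{d}}{\mathrm{d}s}\sfD_\Phi(\rho_s\dvert\nu_s)=-\tfrac12\cI_\Phi(\rho_s\dvert\nu_s)$, where $\cI_\Phi(\rho_s\dvert\nu_s)\coloneqq\E_{\nu_s}[\Phi''(h_s)\|\nabla h_s\|^2]$ is the $\Phi$-Fisher information. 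The second ingredient is that the smoothed measure $\nu_s$ inherits a $\Phi$-Sobolev inequality with the improved constant $\alpha_s\ge\tfrac{\alpha}{1+\alpha s}$ (equivalently the optimal constant obeys $\tfrac{\mathrm d}{\mathrm d s}(1/\alpha_s)\le 1$ along the heat flow, with equality in the Gaussian case). In the normalization for which Langevin dynamics contracts at rate $2\alpha$ (Corollary~\ref{cor:PhiConvergenceAlongLangevin}) this reads $\cI_\Phi(\rho_s\dvert\nu_s)\ge 2\alpha_s\,\sfD_\Phi(\rho_s\dvert\nu_s)$, so $\frac{\mathrm d}{\mathrm d s}\log\sfD_\Phi(\rho_s\dvert\nu_s)\le-\tfrac{\alpha}{1+\alpha s}$; integrating over $[0,\eta]$ yields $\sfD_\Phi(\rho_k^Y\dvert\nu^Y)\le\tfrac{1}{1+\alpha\eta}\,\sfD_\Phi(\rho_k^X\dvert\nu^X)$.

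\emph{Backward step.} This is the delicate part, and the step I expect to be the main obstacle. Because $K_{\mathrm{bwd}}$ is the $\nu^{XY}$-reversal of $K_{\mathrm{fwd}}$, it implements the \emph{reverse} heat flow that transports $\nu^Y=\nu_\eta$ back to $\nu^X=\nu_0$ and simultaneously $\rho_k^Y$ to $\rho_{k+1}^X$. I would establish the matching SDPI coefficient $\tfrac{1}{1+\alpha\eta}$ by running a dissipation argument along this reverse trajectory: the intermediate references are again the smoothed measures $\nu_s$, whose $\Phi$-Sobolev constants $\alpha_s\ge\tfrac{\alpha}{1+\alpha s}$ are now traversed from $s=\eta$ down to $s=0$, so the accumulated contraction is the same $\exp\!\big(-\int_0^\eta\alpha_s\,\D s\big)=\tfrac{1}{1+\alpha\eta}$. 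Making this rigorous requires controlling the score drift $\nabla\log\nu_s$ appearing in the reverse-time generator and verifying the corresponding dissipation identity; unlike the forward step it does not reduce to a plain convolution, and the reverse generator is non-autonomous. An alternative I would try is to invoke a symmetry of the (linearized) SDPI coefficient under reversal with respect to $\nu^{XY}$ to transfer the forward bound directly, though this is cleanest for the $\chi^2$ case and may need care for general $\Phi$.

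Finally I would combine the two SDPI bounds: one iteration gives $\sfD_\Phi(\rho_{k+1}^X\dvert\nu^X)\le\tfrac{1}{1+\alpha\eta}\,\sfD_\Phi(\rho_k^Y\dvert\nu^Y)\le\tfrac{1}{(1+\alpha\eta)^2}\,\sfD_\Phi(\rho_k^X\dvert\nu^X)$, and induction on $k$ produces the stated rate. Note that, consistent with the absence of any smoothness or stepsize restriction in the statement, neither step uses $L$-smoothness of $\nu$ nor an upper bound on $\eta$: the only structural inputs are the heat-flow dissipation identity and the evolving $\Phi$-Sobolev estimate, both driven solely by the assumed $\Phi$-Sobolev inequality for $\nu^X$.
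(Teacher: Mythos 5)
Your proposal is correct and follows essentially the same route as the paper: decompose one iteration into the forward and backward kernels, bound each contraction coefficient by $1/(1+\alpha\eta)$ via a simultaneous dissipation identity combined with the estimate $\alpha_s \ge \alpha/(1+\alpha s)$ for the $\Phi$-Sobolev constant of the smoothed reference measures, and multiply. The obstacle you flag in the backward step dissolves because the dissipation identity $\frac{\D}{\D t}\sfD_\Phi(\mu_t \dvert \nu_t) = -c\,\FI_\Phi(\mu_t \dvert \nu_t)$ holds for an arbitrary time-varying drift --- the drift terms cancel identically in the Fokker--Planck computation, so no control on the score $\nabla\log\nu_s$ is required; this drift-independence is exactly the content of Lemma~\ref{lem:SimultaneousSDE}, which the paper applies verbatim to the backward heat flow~\eqref{eq:BackwardHeatFlow}.
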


We show the tightness of Theorem~\ref{thm:ProximalMain} in KL divergence by doing an explicit calculation for the case when $\nu$ is Gaussian, see Proposition~\ref{prop:OU_Proximal} in Appendix~\ref{app:OU_ProximalSampler}.
The setting of Theorem~\ref{thm:ProximalMain} includes popular mixing time results as special cases. For example, it directly implies the mixing in KL divergence under a log-Sobolev inequality, and mixing in chi-squared divergence under a Poincaré inequality. Under a strong log-concavity assumption on $\nu$, it implies mixing in all $\Phi$-divergences. $\Phi$-Sobolev inequalities and $\Phi$-divergences for different choices of $\Phi$ are mentioned in Table~\ref{table:PhiExamples} in Appendix~\ref{app:ExamplesOfPhiDiv}.

Theorem~\ref{thm:ProximalMain} considers the ideal implementation of the Proximal Sampler where we assume exact access to the RGO. 
Combined with an RGO implementation via rejection sampling, which requires an additional smoothness assumption on $f$, Theorem~\ref{thm:ProximalMain} yields the following corollary.

\begin{corollary}\label{cor:ProximalWithRejectionSamplingRGO}
        Suppose $\nu^X$ satisfies a $\Phi$-Sobolev inequality with optimal constant $\alpha$ and is $L$-smooth. Then for any $\epsilon > 0$, the Proximal Sampler with $\eta \asymp \frac{1}{Ld}$ and with rejection sampling based RGO implementation (as described in Appendix~\ref{app:RGO}) outputs $X_k \sim \rho_k^X$ with $\sfD_{\Phi}(\rho_k^X \dvert \nu^X) \leq \epsilon$ whenever $k \geq \frac{Ld}{2\alpha}\log \frac{\sfD_{\Phi}(\rho_0^X \dvert \nu^X)}{\epsilon} $\,. The expected number of oracle calls to $f$ is $\cO\big( \frac{Ld}{\alpha} \log \frac{\sfD_{\Phi}(\rho_0^X \dvert \nu^X)}{\epsilon} \big)$.
\end{corollary}

\subsection{Related work}\label{subsec:RelatedWork}

Our main results, Theorems~\ref{thm:ULA_PhiMixing} and~\ref{thm:ProximalMain}, study the mixing time in $\Phi$-divergence of the ULA~\eqref{eq:ULA_RandomVariableUpdate} and Proximal Sampler~\eqref{eqs:ProximalSampler} respectively.
We give an overview of the algorithms along with prior mixing time results in Sections~\ref{subsec:Introduction_ULA} and~\ref{subsec:Introduction_Proximal}, and refer the reader to those sections for the corresponding references.
We discuss other related works here; see also~\cite{chewi2023log} for a comprehensive overview of Langevin-based samplers.

Both the ULA~\eqref{eq:ULA_RandomVariableUpdate} and Proximal Sampler~\eqref{eqs:ProximalSampler} have the Langevin dynamics~\eqref{eq:LangevinDynamicsSDE} as the limiting dynamics as $\eta \to 0$, and the convergence of $\Phi$-divergence along the Langevin dynamics is well-known (see Corollary~\ref{cor:PhiConvergenceAlongLangevin}, and see also~\cite{chafai2004entropies, dolbeault2018varphi, achleitner2015large} for a general discussion for diffusions). To study the convergence for discrete-time algorithms, our main tool is that of Strong Data Processing Inequalities (SDPIs) (see Section~\ref{subsec:SDPI}).
SDPIs are Markov chain-dependent strengthenings of the data processing inequality, and are a fundamental concept in information theory. For a comprehensive overview of SDPIs, we refer the reader to~\cite[Chapter~33]{polyanskiywu_book}. Just as data processing inequalities hold in many different metrics, so do SDPIs. \cite{raginsky2016strong} study SDPIs in $\Phi$-divergence for discrete-space Markov chains, and~\cite{du2017strong, polyanskiy2017strong} study SDPIs in lesser generality than $\Phi$-divergences, but for continuous-space chains and networks.

SDPI-inspired techniques have been used in prior works studying the mixing time of Langevin based algorithms, but most works use them implicitly, and to the best of our knowledge, none study $\Phi$-divergences in general.
\cite{VW19} use them to study the convergence of ULA to $\nu^{\eta}$ in Rényi divergence,~\cite{chen2022improved} use them for the Proximal Sampler,~\cite{yuan2023class} mention SDPIs and use them for the Proximal Sampler on graphs,~\cite{kook2024inandout, kook2024r} use them for the constrained Proximal Sampler.
We make the SDPI-based approach explicit (Section~\ref{subsec:SDPI}) and use it for $\Phi$-divergence.
It should also be noted that Rényi divergence is not a $\Phi$-divergence, but it is a simple transformation of a $\Phi$-divergence, so SDPI-based analyses have also been used to show mixing time in Rényi divergence: \cite{VW19} present mixing guarantees in Rényi divergence via SDPI for the ULA, and~\cite{chen2022improved} for the Proximal Sampler.

Bounding the contraction coefficient (Definition~\ref{def:ContractionCoefficient_PhiDivergence}) is the key step in showing a Markov chain satisfies a SDPI, and the main idea used to bound the contraction coefficient for Langevin-based Markov chains is by taking the time derivative of the divergence along two simultaneous stochastic processes (Lemma~\ref{lem:SimultaneousSDE}). This idea traces back to de Bruijn's identity~\cite{stam1959some}, and similar methods have been used well beyond information theory, for example to study diffusions and diffusion models~\cite{chafai2004entropies, albergo2023stochastic, VW19, kook2024inandout}.

\paragraph{Organization} We go over the necessary background material in Section~\ref{sec:Preliminaries}, and then discuss the convergence of $\Phi$-divergence along ULA in Section~\ref{sec:ConvergenceAlongULA}, and along the Proximal Sampler in Section~\ref{sec:ConvergenceAlongProximal}. We summarize and discuss open questions in Section~\ref{sec:Discussion} to conclude.

\section{Preliminaries}\label{sec:Preliminaries}

A distribution $\nu \propto \exp{(-f)}$ on $\R^d$ with a twice-differentiable potential function $f$ is $\alpha$-strongly log-concave for some $\alpha > 0$ if $\alpha I \preceq \nabla^2 f$, and is $L$-smooth for some $L > 0$ if $-LI \preceq \nabla^2 f \preceq LI$. When $\alpha = 0$, we call $\nu$ (weakly) log-concave.
Throughout, we take $\Phi \colon \R_{\geq 0} \to \R$ to be a twice-differentiable strictly convex function with $\Phi(1) = 0$.
Whenever we refer to any probability distribution, we always take it to be a member of $\P_{2, ac}(\R^d)$, i.e. the set of probability distributions on $\R^d$ which are absolutely continuous with respect to Lebesgue measure and have finite second moment. We also refer to distributions via their densities with respect to Lebesgue measure.
We denote $\N_t$ as shorthand for $\N(0, tI)$ where $t >0$, and use $\N(\mu, \Sigma)$ to refer to a Gaussian distribution with mean $\mu \in \R^d$ and positive-definite covariance matrix $\Sigma \in \R^{d \times d}$.

\subsection{$\Phi$-divergences, $\Phi$-Sobolev inequalities, and their properties}\label{subsec:Prelim_PhiSobolev}

Recall the $\Phi$-divergence between probability distributions is defined in~\eqref{eq:PhiDivergence}. To study the mixing time of the ULA (Theorem~\ref{thm:ULA_PhiMixing}) and Proximal Sampler (Theorem~\ref{thm:ProximalMain}) in $\Phi$-divergence, we assume a $\Phi$-Sobolev inequality assumption on the stationary distribution (Definition~\ref{def:PhiSobolevInequality}). 

Our proofs of Theorems~\ref{thm:ULA_PhiMixing} and~\ref{thm:ProximalMain} are based on SDPIs (Section~\ref{subsec:SDPI}),
and core to this SDPI-based proof strategy will be analyzing the change of the $\Phi$-Sobolev constant along various operations such as convolution and pushforward.
These operations arise by interpreting the Markov chains~\eqref{eq:ULA_RandomVariableUpdate} and~\eqref{eqs:ProximalSampler} as updates in the space of distributions, and will be described in Sections~\ref{sec:ConvergenceAlongULA} and~\ref{sec:ConvergenceAlongProximal} respectively. The evolution of the $\Phi$-Sobolev constant along these operations is classical~\cite{chafai2004entropies} and we discuss them now.

The following lemma tells us how the $\PSI$ constant evolves along a Lipschitz pushforward map.

\begin{lemma}\cite[Remark~7]{chafai2004entropies}\label{lem:PhiSIchangeAlongLipschitzPushforward}
    Assume $\nu$ satisfy $\Phi$-Sobolev inequality with optimal constant $\alpha_{\PSI}(\nu)$.
    Let $T : \R^d \to \R^d$ be a $\gamma$-Lipschitz map.
    Then, the pushforward $ \tilde \nu = T_{\#}\nu$ satisfies $\Phi$-Sobolev inequality with optimal constant 
    $$\alpha_{\PSI}(\tilde \nu) \ge \frac{\alpha_{\PSI}(\nu)}{\gamma^2}.$$
\end{lemma}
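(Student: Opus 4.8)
The plan is to work with the function-based form~\eqref{eq:PhiSI_FunctionBased} of the $\Phi$-Sobolev inequality and to transfer admissible test functions from $\tilde\nu$ back to $\nu$ by precomposition with $T$. Concretely, to verify that $\tilde\nu = T_\#\nu$ satisfies $\PSI$ with constant $\alpha_{\PSI}(\nu)/\gamma^2$, I would fix an arbitrary smooth $\tilde g : \R^d \to \R_{\ge 0}$ with $\E_{\tilde\nu}[\tilde g] = 1$ and set $g \coloneqq \tilde g \circ T$. The pushforward identity $\E_\nu[h \circ T] = \E_{\tilde\nu}[h]$ (valid for $\tilde\nu = T_\#\nu$) immediately gives $\E_\nu[g] = 1$, so $g$ is an admissible test function for $\nu$, and I would then compare the two sides of the inequality for $g$ against $\nu$ with the two sides for $\tilde g$ against $\tilde\nu$.

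For the entropy side, the same pushforward identity applied to $h = \Phi(\tilde g)$ yields
\[
\Ent_\Phi^\nu(g) = \E_\nu[\Phi(\tilde g \circ T)] - \Phi(1) = \E_{\tilde\nu}[\Phi(\tilde g)] - \Phi(1) = \Ent_\Phi^{\tilde\nu}(\tilde g),
\]
so the entropy is preserved exactly. For the Dirichlet/Fisher side, the chain rule gives $\nabla g = (\mathrm{D}T)^\top (\nabla \tilde g \circ T)$, and since $T$ is $\gamma$-Lipschitz its Jacobian satisfies $\|\mathrm{D}T\|_{\op} \le \gamma$, whence the pointwise bound $\|\nabla g\|^2 \le \gamma^2 \,\|\nabla\tilde g\|^2 \circ T$. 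Using $\Phi'' \ge 0$ (convexity of $\Phi$) together with the pushforward identity once more,
\[
\Dir_\Phi^\nu(g) = \E_\nu\!\left[\|\nabla g\|^2\,\Phi''(g)\right] \le \gamma^2\, \E_\nu\!\left[\big(\|\nabla\tilde g\|^2\, \Phi''(\tilde g)\big)\circ T\right] = \gamma^2\, \Dir_\Phi^{\tilde\nu}(\tilde g).
\]
Feeding $g$ into the $\Phi$-Sobolev inequality for $\nu$, namely $2\alpha_{\PSI}(\nu)\,\Ent_\Phi^\nu(g) \le \Dir_\Phi^\nu(g)$, and substituting the two relations above yields $2\alpha_{\PSI}(\nu)\,\Ent_\Phi^{\tilde\nu}(\tilde g) \le \gamma^2\,\Dir_\Phi^{\tilde\nu}(\tilde g)$; dividing by $\gamma^2$ gives precisely the $\PSI$ inequality for $\tilde\nu$ with constant $\alpha_{\PSI}(\nu)/\gamma^2$. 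Since $\tilde g$ was arbitrary, the variational definition~\eqref{eq:PhiSI_Constant} then gives $\alpha_{\PSI}(\tilde\nu) \ge \alpha_{\PSI}(\nu)/\gamma^2$.

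The main obstacle I anticipate is regularity: a $\gamma$-Lipschitz map $T$ need not be differentiable everywhere, so the chain rule and the Jacobian bound $\|\mathrm{D}T\|_{\op}\le\gamma$ must be justified carefully. I would invoke Rademacher's theorem, which guarantees that $T$ is differentiable Lebesgue-almost everywhere with the stated operator-norm bound, and appeal to the chain rule for compositions of smooth functions with Lipschitz maps (valid almost everywhere); since $\nu \in \P_{2,ac}(\R^d)$ is absolutely continuous, these almost-everywhere identities are enough for the integral estimates above. A secondary technical point is that $g = \tilde g \circ T$ is only Lipschitz rather than smooth, so I would either note that~\eqref{eq:PhiSI_FunctionBased} extends from smooth to Lipschitz test functions by a standard mollification/approximation argument, or argue directly in the distribution-based form~\eqref{eq:PhiSI_DistributionBased} with $\tilde\mu$ and its pullback in place of $\tilde g$ and $g$.
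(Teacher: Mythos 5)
Your proof is correct: the paper itself gives no proof of this lemma (it is imported directly from \cite[Remark~7]{chafai2004entropies}), and your test-function pullback argument --- setting $g = \tilde g \circ T$, noting the entropy $\Ent_\Phi^{\nu}(g) = \Ent_\Phi^{\tilde\nu}(\tilde g)$ is preserved under the pushforward identity while the Dirichlet form picks up at most a factor $\gamma^2$ from $\|\nabla g\| \le \gamma\,\|\nabla\tilde g\|\circ T$ --- is exactly the standard route to this result. Your handling of the regularity issues (Rademacher for the a.e.\ differentiability of $T$, mollification to pass from Lipschitz to smooth test functions) is the right way to close the technical gaps, so nothing is missing.
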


The next lemma describes the change of the $\PSI$ constant after convolution.

\begin{lemma}\cite[Corollary~3.1]{chafai2004entropies}\label{lem:PhiSIchangeAlongConvolution}
    Assume $\mu$ and $\nu$ satisfy $\PSI$ with optimal constants $\alpha_{\PSI}(\mu)$ and $\alpha_{\PSI}(\nu)$, respectively. 
    Then the convolution $\mu \ast \nu$ satisfies $\PSI$ with constant
    \[
    \frac{1}{\alpha_{\PSI}(\mu * \nu)} \le \frac{1}{\alpha_{\PSI}(\mu)} + \frac{1}{\alpha_{\PSI}(\nu)}.
    \]
\end{lemma}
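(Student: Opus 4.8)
The plan is to realize $\mu * \nu$ as a pushforward of the product measure $\mu \otimes \nu$ under the addition map, and then combine the tensorization of $\Phi$-entropy with the two individual $\Phi$-Sobolev inequalities. Concretely, I would fix a smooth $h \colon \R^d \to \R_{\geq 0}$ with $\E_{\mu * \nu}[h] = 1$ and lift it to $\R^d \times \R^d$ by setting $H(x,y) := h(x+y)$. Since $X + Y \sim \mu * \nu$ whenever $X \sim \mu$ and $Y \sim \nu$ are independent, the $\Phi$-entropy only sees the law of its argument, so $\Ent_{\Phi}^{\mu * \nu}(h) = \Ent_{\Phi}^{\mu \otimes \nu}(H)$, and likewise any integral of $\|\nabla h(x+y)\|^2 \Phi''(h(x+y))$ against $\mu \otimes \nu$ equals the corresponding integral of $h$ against $\mu * \nu$.

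The key step is the subadditivity (tensorization) of $\Phi$-entropy on product measures,
\[
\Ent_{\Phi}^{\mu \otimes \nu}(H) \;\leq\; \E_{\nu}\bigl[\Ent_{\Phi}^{\mu}(H(\cdot,y))\bigr] + \E_{\mu}\bigl[\Ent_{\Phi}^{\nu}(H(x,\cdot))\bigr].
\]
To the first term I would apply the $\Phi$-Sobolev inequality for $\mu$ (with optimal constant $\alpha_{\PSI}(\mu)$) to the slice $x \mapsto H(x,y)$, and to the second the inequality for $\nu$ applied to $y \mapsto H(x,y)$. The decisive observation is that the partial gradients coincide, $\nabla_x H(x,y) = \nabla_y H(x,y) = (\nabla h)(x+y)$, so that after integrating out the spectator variable each Dirichlet form collapses to a \emph{single} copy of $\Dir_{\Phi}^{\mu * \nu}(h)$. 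Summing yields $\Ent_{\Phi}^{\mu * \nu}(h) \leq \bigl(\tfrac{1}{2\alpha_{\PSI}(\mu)} + \tfrac{1}{2\alpha_{\PSI}(\nu)}\bigr)\, \Dir_{\Phi}^{\mu * \nu}(h)$, which is exactly the assertion once read against the definition of the $\PSI$ constant; since $\alpha_{\PSI}(\mu * \nu)$ is the largest valid constant, it dominates $(1/\alpha_{\PSI}(\mu)+1/\alpha_{\PSI}(\nu))^{-1}$. I would stress that it is precisely the coincidence of the two partial gradients that produces the \emph{additive} reciprocal constants, rather than the crude factor-two bound one would get from treating the sum map as a $\sqrt{2}$-Lipschitz pushforward via Lemma~\ref{lem:PhiSIchangeAlongLipschitzPushforward}.

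I expect the main obstacle to be the subadditivity of $\Phi$-entropy itself, as this is the only ingredient going beyond the defining inequalities. It fails for arbitrary strictly convex $\Phi$ and requires an admissibility condition, e.g. concavity of $1/\Phi''$ (which holds in the relative-entropy and chi-squared cases, among others). I would establish it by reducing to the two-factor statement, conditioning on one coordinate, and invoking a pointwise convexity inequality for $\Phi$-entropy in the spirit of the variational representation $\Ent_{\Phi}^{\nu}(g) = \inf_{t} \E_{\nu}[\Phi(g) - \Phi(t) - \Phi'(t)(g-t)]$; the multi-factor case then follows by induction. A minor technical point to handle along the way is that the coordinate slices $H(\cdot,y)$ are not normalized to unit mean, so the $\Phi$-Sobolev inequality must be used in its functional form valid for all nonnegative functions; the remaining bookkeeping -- the pushforward identity, the gradient computation, and passing to optimal constants -- is routine.
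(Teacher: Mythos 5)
The paper does not prove this lemma; it cites it as \cite[Corollary~3.1]{chafai2004entropies}, and your argument is essentially the standard proof of that cited result: write $\mu * \nu$ as the pushforward of $\mu \otimes \nu$ under $(x,y) \mapsto x+y$, tensorize the $\Phi$-entropy, apply each marginal's $\PSI$ to the corresponding slice of $H(x,y)=h(x+y)$, and use $\nabla_x H = \nabla_y H = (\nabla h)(x+y)$ so that both Dirichlet forms collapse to $\Dir_\Phi^{\mu*\nu}(h)$ and the reciprocal constants add. The computation is correct, and you are right that the additivity of $1/\alpha$ (rather than a factor-$2$ loss from a $\sqrt{2}$-Lipschitz pushforward) comes precisely from the coincidence of the partial gradients. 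The one substantive caveat is the one you yourself flag: subadditivity of $\Ent_\Phi$ over product measures is \emph{not} free for every twice-differentiable strictly convex $\Phi$ --- it requires an admissibility condition such as concavity of $1/\Phi''$ (the Lata{\l}a--Oleszkiewicz / Chafa\"{\i} class), and this is exactly the hypothesis under which Corollary~3.1 of \cite{chafai2004entropies} is proved. So your proof, like the cited result, covers a strictly smaller class than the paper's standing assumption on $\Phi$ nominally permits; this is a gap in the paper's statement rather than in your argument, but it should be made explicit rather than deferred. A second point worth tightening: the paper's Definition~\ref{def:PhiSobolevInequality} takes the infimum only over probability densities ($\E_\nu[g]=1$), whereas your slices $H(\cdot,y)$ have $y$-dependent means, so you need the inequality in its unnormalized functional form $2\alpha\,\Ent_\Phi^\nu(g) \le \Dir_\Phi^\nu(g)$ for all admissible $g \ge 0$. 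For $\Phi(x)=x\log x$ and $\Phi(x)=(x-1)^2$ this follows from the normalized version by homogeneity, but for general admissible $\Phi$ the two optimal constants need not coincide a priori, so either the unnormalized form should be taken as the definition (as in \cite{chafai2004entropies}) or the equivalence should be argued.
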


The following lemma tells us that when $\nu$ is $\alpha$-strongly log-concave, it also satisfies $\Phi$-Sobolev inequality with the same constant.

\begin{lemma}\cite[Corollary~2.1]{chafai2004entropies}\label{lem:PhiSISLC}
    If $\nu$ is $\alpha$-strongly log-concave for some $\alpha > 0$, then $\nu$ satisfies $\PSI$ with constant 
    $$\alpha_{\PSI}(\nu) \ge \alpha .$$
\end{lemma}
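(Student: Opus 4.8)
The plan is to run the Bakry--\'Emery semigroup interpolation along the Langevin diffusion whose invariant measure is $\nu$. Write $\mathcal{L} = \Delta - \nabla f \cdot \nabla$ for the Langevin generator, $(P_t)_{t \ge 0}$ for the associated semigroup (which is reversible with respect to $\nu$), and $\Gamma(g) = \|\nabla g\|^2$ for the carr\'e du champ, so that $\Dir_\Phi^\nu(g) = \E_\nu[\Phi''(g)\,\Gamma(g)]$. The hypothesis $\alpha I \preceq \nabla^2 f$ is exactly the curvature condition $\mathrm{CD}(\alpha,\infty)$: by Bochner's formula the iterated carr\'e du champ satisfies
\begin{equation*}
\Gamma_2(g) = \|\nabla^2 g\|_{\HS}^2 + \langle \nabla g,\, \nabla^2 f\, \nabla g\rangle \ge \alpha\,\Gamma(g).
\end{equation*}
I want to leverage this pointwise curvature lower bound to obtain the integrated inequality $2\alpha\,\Ent_\Phi^\nu(g) \le \Dir_\Phi^\nu(g)$, which is the function form~\eqref{eq:PhiSI_FunctionBased} of the claim.

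First I would set up the interpolation. Fix a smooth $g \ge 0$ with $\E_\nu[g] = 1$, put $g_t = P_t g$, and consider $\Lambda(t) = \E_\nu[\Phi(g_t)]$. Differentiating and integrating by parts against $\nu$ (using $\E_\nu[u\,\mathcal{L}v] = -\E_\nu[\Gamma(u,v)]$ with $u = \Phi'(g_t)$, $v = g_t$, together with $\Gamma(\Phi'(g_t), g_t) = \Phi''(g_t)\Gamma(g_t)$) gives
\begin{equation*}
\Lambda'(t) = \E_\nu[\Phi'(g_t)\,\mathcal{L}g_t] = -\E_\nu[\Phi''(g_t)\,\Gamma(g_t)] = -\Dir_\Phi^\nu(g_t).
\end{equation*}
Since $\nu$ is strongly log-concave the semigroup is ergodic, so $g_t \to \E_\nu[g]$ and $\Lambda(\infty) = \Phi(\E_\nu[g])$. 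Integrating from $0$ to $\infty$ then yields the representation
\begin{equation*}
\Ent_\Phi^\nu(g) = \Lambda(0) - \Lambda(\infty) = \int_0^\infty \Dir_\Phi^\nu(g_t)\,\D t.
\end{equation*}

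The heart of the argument is to show that this Dirichlet-type energy decays exponentially, namely $\frac{d}{dt}\Dir_\Phi^\nu(g_t) \le -2\alpha\,\Dir_\Phi^\nu(g_t)$. Differentiating $\Dir_\Phi^\nu(g_t) = \E_\nu[\Phi''(g_t)\Gamma(g_t)]$ in $t$ and integrating by parts (invoking Bochner once more) produces a main term $2\,\E_\nu[\Phi''(g_t)\,\Gamma_2(g_t)]$ together with remainder terms carrying $\Phi'''$, $\Phi''''$, and $\Gamma(g_t, \Gamma(g_t))$. This is the crux and the main obstacle: those remainders must be shown to have a favorable sign, which is precisely where the \emph{convexity structure} of $\Phi$ enters --- the standard sufficient condition is that $1/\Phi''$ be concave (equivalently $\Phi''\Phi'''' \ge 2(\Phi''')^2$), which holds for the divergences of interest ($\Phi(x)=x\log x$ gives $1/\Phi''=x$, and $\Phi(x)=(x-1)^2$ gives $1/\Phi''$ constant). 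Granting this, the remainder is nonnegative and may be dropped, so that, using $\Gamma_2 \ge \alpha\Gamma$ and $\Phi'' \ge 0$,
\begin{equation*}
-\frac{d}{dt}\Dir_\Phi^\nu(g_t) \ge 2\,\E_\nu[\Phi''(g_t)\,\Gamma_2(g_t)] \ge 2\alpha\,\E_\nu[\Phi''(g_t)\,\Gamma(g_t)] = 2\alpha\,\Dir_\Phi^\nu(g_t).
\end{equation*}
Gr\"onwall then gives $\Dir_\Phi^\nu(g_t) \le e^{-2\alpha t}\Dir_\Phi^\nu(g)$, and substituting into the integral representation yields $\Ent_\Phi^\nu(g) \le \frac{1}{2\alpha}\Dir_\Phi^\nu(g)$, i.e. $\alpha_{\PSI}(\nu) \ge \alpha$.

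A shorter alternative, which reuses machinery already in hand, is to combine Caffarelli's contraction theorem with Lemma~\ref{lem:PhiSIchangeAlongLipschitzPushforward}: when $\alpha I \preceq \nabla^2 f$, the Brenier map transporting $\N(0,\alpha^{-1}I)$ onto $\nu$ is $1$-Lipschitz, so $\alpha_{\PSI}(\nu) \ge \alpha_{\PSI}(\N(0,\alpha^{-1}I)) \ge \alpha$ once the Gaussian base case is established (itself a one-dimensional Bakry--\'Emery computation plus tensorization). This route is clean but imports the heavier Caffarelli estimate, so I would present the direct semigroup proof as primary, since it keeps everything inside the same $\Gamma_2$ calculus and isolates the one genuine difficulty: controlling the sign of the $\Phi'''/\Phi''''$ remainder.
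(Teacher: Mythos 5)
The paper gives no proof of this lemma --- it is imported verbatim from \cite[Corollary~2.1]{chafai2004entropies} --- and your semigroup interpolation is precisely the Bakry--\'Emery argument underlying that citation, so your primary route reconstructs the intended proof rather than offering a different one. The setup (the representation $\Ent_\Phi^\nu(g) = \int_0^\infty \Dir_\Phi^\nu(P_t g)\,\D t$ and the reduction to exponential decay of $\Dir_\Phi^\nu(P_t g)$) is correct. The one genuine gap is exactly where you say it is, and your description of how it resolves is slightly off: the remainder $\E_\nu\big[\Phi''''(g_t)\Gamma(g_t)^2 + 2\Phi'''(g_t)\Gamma(g_t,\Gamma(g_t))\big]$ is \emph{not} nonnegative on its own, since $\Phi'''$ and $\Gamma(g_t,\Gamma(g_t)) = 2\,\nabla g_t^{\top}\nabla^2 g_t\,\nabla g_t$ can each take either sign. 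What is nonnegative is its sum with the Hessian part $2\,\E_\nu[\Phi''(g_t)\|\nabla^2 g_t\|_{\HS}^2]$ of $2\,\E_\nu[\Phi''(g_t)\Gamma_2(g_t)]$: after Cauchy--Schwarz one faces the pointwise quadratic form $2\Phi'' a^2 + 4\Phi''' ab + \Phi'''' b^2$ with $a = \|\nabla^2 g_t\|_{\HS}$ and $b = \Gamma(g_t)$, whose nonnegativity is the discriminant condition $\Phi''\Phi'''' \ge 2(\Phi''')^2$, i.e.\ concavity of $1/\Phi''$. Only the $\nabla g_t^{\top}\nabla^2 f\,\nabla g_t \ge \alpha\,\Gamma(g_t)$ piece of $\Gamma_2$ should then be retained, which still yields $-\frac{d}{dt}\Dir_\Phi^\nu(g_t) \ge 2\alpha\,\Dir_\Phi^\nu(g_t)$ and closes the argument.

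Separately, note that this admissibility condition is a real hypothesis, not a formality: Chafa\"i's Corollary~2.1 is stated for $C^4$ convex $\Phi$ with $1/\Phi''$ concave, whereas the paper's standing assumption is only twice differentiability and strict convexity. Your proof (and the cited result) therefore covers only admissible $\Phi$ --- which includes every divergence actually used downstream, such as $x\log x$ and $(x-1)^2$ --- and you should state this restriction explicitly. Your alternative route via Caffarelli's contraction theorem together with Lemma~\ref{lem:PhiSIchangeAlongLipschitzPushforward} is sound but does not avoid the issue: the Gaussian base case is itself proved either by the same $\Gamma_2$ computation or by tensorization from dimension one, and tensorization of $\Phi$-entropy again requires $1/\Phi''$ concave.
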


We conclude this subsection by describing the rate of change of $\Phi$-divergence along simultaneous evolutions of the same SDE. The following lemma will be crucial in the SDPI-based approach. The same lemma described in terms of Markov semigroup theory can be found in~\cite[Theorem~8.3.1]{chewi2023log}. 
We prove Lemma~\ref{lem:SimultaneousSDE} in Appendix~\ref{app:ProofOfSimultaneousSDE}.

\begin{lemma}\label{lem:SimultaneousSDE}
    Suppose $X_t \sim \mu_t$ and $X_t \sim \nu_t$ with initial conditions $\mu_0$ and $\nu_0$ are two solutions of the following SDE:
    \begin{equation}\label{eq:GeneralSDE}
    \D X_t = b_t(X_t) \D t + \sqrt{2c} \D W_t\,,
    \end{equation}
    where $b_t : \R^d \to \R^d$ is a time-varying drift function, $c$ is a positive constant, and $W_t$ is the standard Brownian motion on $\R^d$.
    Then for all $t \ge 0$,
    \begin{equation*}
        \frac{\D}{\D t}\sfD_\Phi (\mu_t \dvert \nu_t) 
        = -c \, \FI_\Phi(\mu_t \dvert \nu_t).
    \end{equation*}
\end{lemma}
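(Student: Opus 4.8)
The plan is to differentiate $\sfD_\Phi(\mu_t \dvert \nu_t) = \int \Phi(r_t)\,\nu_t \D x$ directly in time, where $r_t \coloneqq \frac{\D \mu_t}{\D \nu_t} = \mu_t/\nu_t$ denotes the ratio of densities, and then use the Fokker--Planck equation to reduce everything to the Fisher information $\FI_\Phi$. The key structural fact is that, because $\mu_t$ and $\nu_t$ solve the \emph{same} SDE~\eqref{eq:GeneralSDE}, they obey the same continuity equation $\partial_t \mu_t = \nabla \cdot (c \nabla \mu_t - b_t \mu_t)$, and likewise for $\nu_t$. The whole point will be that the drift $b_t$ cancels between the two flows, leaving only the diffusion term.

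First I would apply the product and quotient rules, using $\partial_t r_t = \partial_t\mu_t/\nu_t - r_t\,\partial_t \nu_t / \nu_t$, to write
\begin{equation*}
\frac{\D}{\D t}\sfD_\Phi(\mu_t \dvert \nu_t) = \int \Phi'(r_t)\,\partial_t \mu_t \D x + \int \bigl(\Phi(r_t) - r_t \Phi'(r_t)\bigr)\,\partial_t \nu_t \D x.
\end{equation*}
Then I would substitute the two Fokker--Planck equations and integrate by parts once in each integral, using $\nabla \Phi'(r_t) = \Phi''(r_t)\nabla r_t$ and the identity $\nabla\bigl(\Phi(r_t) - r_t\Phi'(r_t)\bigr) = -r_t \Phi''(r_t)\nabla r_t$. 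This yields
\begin{equation*}
\frac{\D}{\D t}\sfD_\Phi(\mu_t \dvert \nu_t) = -\int \Phi''(r_t)\,\nabla r_t \cdot (c\nabla\mu_t - b_t\mu_t)\D x + \int r_t\Phi''(r_t)\,\nabla r_t \cdot (c\nabla \nu_t - b_t \nu_t)\D x.
\end{equation*}

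Next, substituting $\mu_t = r_t \nu_t$ so that $\nabla \mu_t = \nu_t \nabla r_t + r_t \nabla \nu_t$, I would expand both integrands. The mixed terms proportional to $c\,\nabla r_t \cdot \nabla \nu_t$ cancel between the two integrals, and, crucially, every term involving the drift $b_t$ cancels as well. What survives is exactly $-c\int \Phi''(r_t)\,\|\nabla r_t\|^2\, \nu_t \D x = -c\,\FI_\Phi(\mu_t \dvert \nu_t)$, which is the claim.

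The main obstacle is analytic rather than algebraic: justifying the interchange of $\frac{\D}{\D t}$ with the spatial integral and, above all, the vanishing of the boundary terms in the two integration-by-parts steps. Under the paper's standing assumptions (laws in $\P_{2,ac}(\R^d)$, smooth strictly convex $\Phi$, and the regularizing effect of the nondegenerate noise $\sqrt{2c}\,\D W_t$, which instantly smooths $\mu_t$ and $\nu_t$), these are standard, and I would either impose mild decay hypotheses or argue via the induced heat-kernel smoothness that the flux $\Phi''(r_t)\nabla r_t\,\nu_t\cdot(\cdot)$ decays fast enough to drop boundary contributions. The drift cancellation is the conceptual heart of the statement---it is precisely what makes the dissipation rate independent of $b_t$---and is the one place where I would track the signs with extra care.
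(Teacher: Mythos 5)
Your proposal is correct and follows essentially the same route as the paper's proof in Appendix~C: differentiate under the integral, substitute the Fokker--Planck equation for both $\mu_t$ and $\nu_t$, integrate by parts, and observe that all drift terms cancel, leaving $-c\,\FI_\Phi(\mu_t \dvert \nu_t)$. The only cosmetic difference is that you group the two $\partial_t\nu_t$ contributions into the single integrand $\Phi(r_t)-r_t\Phi'(r_t)$ before integrating by parts, whereas the paper keeps three separate terms $T_1,T_2,T_3$ and cancels at the end; the underlying computation and the handling of regularity are the same.
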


As mentioned in Section~\ref{subsec:RelatedWork}, when~\eqref{eq:GeneralSDE} is taken to be Brownian motion (i.e. the drift $b_t = 0$), $\nu_t$ is fixed to be the Lebesgue measure, and $\Phi(x) = x \log x$\,, Lemma~\ref{lem:SimultaneousSDE} corresponds to de Bruijn's identity~\cite{stam1959some}.
The SDE~\eqref{eq:GeneralSDE} is more general than the Langevin dynamics~\eqref{eq:LangevinDynamicsSDE} as it includes a time-varying drift function; this is required to study the Proximal Sampler, since as we will see in Section~\ref{sec:ConvergenceAlongProximal}, the backward step of the Proximal Sampler corresponds to an SDE with time-varying drift.

As an easy consequence of Lemma~\ref{lem:SimultaneousSDE}, we have the following exponential convergence of $\Phi$-divergence for the Langevin dynamics when $\nu$ satisfies a $\Phi$-Sobolev inequality.

\begin{corollary}\label{cor:PhiConvergenceAlongLangevin}
 Suppose $X_t \sim \rho_t$ evolves along the Langevin dynamics~\eqref{eq:LangevinDynamicsSDE} to sample from $\nu \propto \exp{(-f)}$, and let $\nu$ satisfy a $\Phi$-Sobolev inequality with optimal constant $\alpha >0$. Then,
 \[
 \sfD_{\Phi}(\rho_t \dvert \nu) \leq e^{-2\alpha t}\sfD_{\Phi}(\rho_0 \dvert \nu).
 \]
\end{corollary}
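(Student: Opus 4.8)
The plan is to combine Lemma~\ref{lem:SimultaneousSDE} with the $\Phi$-Sobolev inequality assumption via a Grönwall argument. The key observation is that the Langevin dynamics~\eqref{eq:LangevinDynamicsSDE} is exactly the special case of the general SDE~\eqref{eq:GeneralSDE} in which the drift is time-invariant, $b_t(x) = -\nabla f(x)$, and the diffusion constant is $c = 1$. Crucially, $\nu$ is a stationary distribution of the Langevin dynamics, so if we let $\mu_t = \rho_t$ be the law of our process and let $\nu_t \equiv \nu$ be the (constant) stationary solution, then both are solutions of the \emph{same} SDE with the same drift and diffusion. This is precisely the setup required to invoke Lemma~\ref{lem:SimultaneousSDE}.

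First I would apply Lemma~\ref{lem:SimultaneousSDE} with $\mu_t = \rho_t$ and $\nu_t = \nu$ to obtain the dissipation identity
\begin{equation*}
\frac{\D}{\D t}\sfD_\Phi(\rho_t \dvert \nu) = -\FI_\Phi(\rho_t \dvert \nu).
\end{equation*}
Next I would invoke the hypothesis that $\nu$ satisfies a $\Phi$-Sobolev inequality with optimal constant $\alpha$, which by Definition~\ref{def:PhiSobolevInequality} gives $\FI_\Phi(\rho_t \dvert \nu) \ge 2\alpha\, \sfD_\Phi(\rho_t \dvert \nu)$ for every $t$. Substituting this lower bound on the Fisher information into the dissipation identity yields the differential inequality
\begin{equation*}
\frac{\D}{\D t}\sfD_\Phi(\rho_t \dvert \nu) \le -2\alpha\, \sfD_\Phi(\rho_t \dvert \nu).
\end{equation*}
Finally I would integrate this inequality by Grönwall's lemma: writing $u(t) = \sfD_\Phi(\rho_t \dvert \nu)$, the inequality $u'(t) \le -2\alpha\, u(t)$ gives $u(t) \le e^{-2\alpha t} u(0)$, which is exactly the claimed bound.

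The conceptual steps are all routine once the identification is made, so the only point requiring genuine care is justifying the use of Lemma~\ref{lem:SimultaneousSDE} with one of the two processes being the stationary measure $\nu$. One must confirm that the constant trajectory $\nu_t \equiv \nu$ is legitimately a solution of~\eqref{eq:LangevinDynamicsSDE} in the sense required by the lemma (i.e.\ that $\nu$ solves the associated Fokker--Planck equation, which holds precisely because $\nu$ is stationary), and that the regularity conditions implicit in the derivation of Lemma~\ref{lem:SimultaneousSDE} are met along the flow. Since Lemma~\ref{lem:SimultaneousSDE} is stated for two arbitrary solutions with arbitrary initial conditions, taking $\nu_0 = \nu$ and using stationarity to keep $\nu_t = \nu$ for all $t$ is the natural and expected instantiation, so I anticipate no real obstacle beyond this verification.
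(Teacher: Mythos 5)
Your proposal is correct and follows essentially the same route as the paper: apply Lemma~\ref{lem:SimultaneousSDE} with $\mu_t = \rho_t$ and $\nu_t \equiv \nu$ (using stationarity of $\nu$ and $c=1$), invoke the $\Phi$-Sobolev inequality to get $\frac{\D}{\D t}\sfD_\Phi(\rho_t \dvert \nu) \le -2\alpha\,\sfD_\Phi(\rho_t \dvert \nu)$, and integrate via Grönwall. In fact your write-up gets the sign of the dissipation identity right, whereas the paper's displayed line contains a small sign typo ($= \FI_\Phi$ where it should be $= -\FI_\Phi$).
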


\begin{proof}
    Applying Lemma~\ref{lem:SimultaneousSDE} (with $\mu_t = \rho_t$ and $\nu_t = \nu$) along with the $\Phi$-Sobolev inequality of $\nu$ yields,
    \begin{equation*}
        \frac{\D}{\D t}\sfD_\Phi (\mu_t \dvert \nu) 
        = - \FI_\Phi(\mu_t \dvert \nu) \leq -2\alpha \sfD_\Phi (\mu_t \dvert \nu).
    \end{equation*}
    Integrating the differential inequality from $0$ to $t$ using Grönwall's lemma completes the proof.
    
\end{proof}

\subsection{Strong data processing inequalities}\label{subsec:SDPI}

We now discuss Strong Data Processing Inequalities (SDPIs), which will be our primary proof strategy. 
For any $\Phi$-divergence, the data processing inequality states that for any two distributions $\mu$ and $\nu$, and for any Markov kernel $\bQ$, $\sfD_\Phi(\mu \bQ \dvert \nu \bQ) \leq \sfD_\Phi (\mu \dvert \nu)$, provided all quantities are well-defined.\footnote{For an example of the Markov kernel notation, and its behaviour as on operator on distributions, see~\eqref{eq:ULA_Update} and the subsequent text.}
\emph{Strong} data processing inequalities check if the inequality is strict, and quantify the decrease~\cite{polyanskiywu_book, polyanskiy2017strong, raginsky2016strong}.
They do so by fixing the second input distribution (i.e. $\nu$) and then varying the first distribution (i.e. $\mu$), to see the worst case ratio $\frac{\sfD_\Phi(\mu \bQ \dvert \nu \bQ)}{\sfD_\Phi (\mu \dvert \nu)}$. The ratio is called the \emph{contraction coefficient}, and when it is strictly less than $1$ for all valid distributions $\mu$\,, we say that $(\bQ, \nu)$ satisfies a strong data processing inequality.

Next, we define the contraction coefficient in Definition~\ref{def:ContractionCoefficient_PhiDivergence} and define SDPIs in $\Phi$-divergence in Definition~\ref{def:SDPI_PhiDivergence}.

\begin{definition}\label{def:ContractionCoefficient_PhiDivergence}
    Let $\nu$ be a probability distribution, $\bQ$ be a Markov kernel, and $\sfD_{\Phi}$ be a $\Phi$-divergence. Then the contraction coefficient $\varepsilon_{\sfD_{\Phi}}$ is defined as follows
    \begin{equation}\label{eq:PhiDivergenceContractionCoefficient}
        \varepsilon_{\sfD_{\Phi}} (\bQ, \nu) \coloneqq \sup_{\rho\,:\, 0<\sfD_{\Phi}(\rho\dvert \nu) < \infty} \frac{\sfD_{\Phi}(\rho \bQ \dvert \nu \bQ)}{\sfD_{\Phi} (\rho \dvert \nu)}\,.
    \end{equation}
\end{definition}

\begin{definition}\label{def:SDPI_PhiDivergence}
Let $\nu$ be a probability distribution, $\bQ$ be a Markov kernel, and $\sfD_{\Phi}$ be a $\Phi$-divergence. Further define $\varepsilon_{\sfD_{\Phi}} (\bQ, \nu)$ as in~\eqref{eq:PhiDivergenceContractionCoefficient}. Then we say that $(\bQ, \nu)$ satisfies a strong data processing inequality in $\Phi$-divergence when $\varepsilon_{\sfD_{\Phi}} (\bQ, \nu) < 1$\,. In particular, we have,
\begin{equation}\label{eq:SDPI_PhiDivergence}
    \sfD_\Phi(\mu \bQ \dvert \nu \bQ) \leq \varepsilon_{\sfD_{\Phi}} (\bQ, \nu) \sfD_\Phi (\mu \dvert \nu)\,,
\end{equation}
where $\mu$ is any distribution such that $\sfD_{\Phi}(\mu \dvert \nu) < \infty$\,.
\end{definition}

As a direct consequence of Definition~\ref{def:SDPI_PhiDivergence}, we can see that if $\nu$ is invariant for $\bQ$, then for any $\mu$,
\begin{equation}\label{eq:SDPIForMixing}
\sfD_{\Phi}(\mu \bQ^k \dvert \nu) \leq \varepsilon_{\sfD_{\Phi}} (\bQ, \nu)^k \, \sfD_\Phi (\mu \dvert \nu).
\end{equation}
Hence, if we desire to show a mixing time result for Markov chain $\bQ$ with invariant distribution $\nu$, then we require a bound on $\varepsilon_{\sfD_{\Phi}} (\bQ, \nu)$ which is strictly less than $1$.

\section{Convergence Along ULA}\label{sec:ConvergenceAlongULA}

Recall the ULA update~\eqref{eq:ULA_RandomVariableUpdate} for sampling from $\nu \propto \exp{(-f)}$. Defining $\rho_k \coloneqq \law(X_k)$\,, the update~\eqref{eq:ULA_RandomVariableUpdate} can be seen as the following update in distribution:
\begin{equation}\label{eq:ULA_Update}
    \rho_{k+1} = (\id - \eta \nabla f)_{\#} \rho_k * \N(0, 2\eta I)\,.
\end{equation}
Denote the Markov kernel of ULA as $\bP$ (i.e. $\rho_{k+1} = \rho_k \bP = \rho_0 \bP^{k+1}$), and define $F \colon \R^d \to \R^d$ by $F(x) = x - \eta \nabla f(x)$. 
Recall from Section~\ref{subsec:Introduction_ULA} that ULA has a biased stationary distribution $\nu^{\eta}$. Then as per~\eqref{eq:SDPIForMixing}, in order to obtain a mixing time result, our goal is to bound $\varepsilon_{\sfD_{\Phi}}(\bP, \nu^{\eta})$. We now provide a proof outline for this.

\subsection{Proof outline}\label{subsec:ULA_ProofOutline}
 We wish to bound $\varepsilon_{\sfD_{\Phi}}(\bP, \nu^{\eta})$. To that end, let $\mu$ be an arbitrary probability distribution and recall by Definition~\ref{def:ContractionCoefficient_PhiDivergence} that we want to control
\[
\frac{\sfD_{\Phi}(\mu \bP \dvert \nu^{\eta} \bP)}{\sfD_{\Phi}(\mu \dvert \nu^{\eta})} = \frac{\sfD_{\Phi}(F_{\#}\mu * \N(0, 2\eta I) \dvert F_{\#}\nu^{\eta} * \N(0, 2\eta I))}{\sfD_{\Phi}(\mu \dvert \nu^{\eta})}.
\]
Suppose that $F$ is a bijective map. Then, as $\Phi$-divergence is invariant to simultaneous bijective deterministic maps, we have $\sfD_{\Phi}(\mu \dvert \nu^{\eta}) = \sfD_{\Phi}(F_{\#}\mu \dvert F_{\#}\nu^{\eta})$.\footnote{This can be seen as a consequence of data processing inequality, i.e. for any $\mu$ and $\rho$ and bijective map $F$, $\sfD_{\Phi} (\mu \dvert \rho) \geq \sfD_{\Phi} (F_{\#}\mu \dvert F_{\#}\rho) \geq \sfD_{\Phi} (F^{-1}_{\#}F_{\#}\mu \dvert F^{-1}_{\#}F_{\#}\rho) = \sfD_{\Phi} (\mu \dvert \rho)$, hence all the inequalities must be equalities.}
Therefore, under the assumption that $F$ is bijective, the quantity we wish to bound is 
\[
\frac{\sfD_{\Phi}(F_{\#}\mu * \N(0, 2\eta I) \dvert F_{\#}\nu^{\eta} * \N(0, 2\eta I))}{\sfD_{\Phi}(F_{\#}\mu \dvert F_{\#}\nu^{\eta})}.
\]
Denoting $F_{\#}\mu * \N(0, tI) = \mu_t$ and $F_{\#}\nu^{\eta} * \N(0, tI) = \nu_t$, we want to bound 
\[
\frac{\sfD_{\Phi}(\mu_{2\eta} \dvert \nu_{2\eta})}{\sfD_{\Phi}(\mu_0 \dvert \nu_0)}.
\]
As convolving with a Gaussian can be viewed as the solution to the heat equation, this quantity can be bounded as a consequence of Lemma~\ref{lem:SimultaneousSDE} (for the Brownian motion SDE, i.e., for~\eqref{eq:GeneralSDE} with $b_t = 0$ and $c = \frac{1}{2}$) under the assumption that $\nu_t$ satisfies a $\Phi$-Sobolev inequality for $t \in [0, 2\eta]$.
The final expression obtained is independent of $\mu$, and therefore this provides a bound on $\varepsilon_{\sfD_{\Phi}}(\bP, \nu^{\eta})$.

This is the outline we follow and the assumptions stated in Theorem~\ref{thm:ULA_PhiMixing} are to ensure that the assumptions mentioned in the proof sketch go through. 

\subsection{Proof of Theorem~\ref{thm:ULA_PhiMixing}}\label{subsec:ProofOfULAMain}

In this section we prove Theorem~\ref{thm:ULA_PhiMixing}. The key lemma for doing so is the following.
The proof of Lemma~\ref{lem:ULA_Mixing_ContractionCoefficient} follows the outline mentioned in Section~\ref{subsec:ULA_ProofOutline} and is in Appendix~\ref{app:ProofOfLemmaULA_Mixing_ContractionCoefficient}.

\begin{lemma}\label{lem:ULA_Mixing_ContractionCoefficient}
    Let $\bP$ denote the ULA Markov kernel with update~\eqref{eq:ULA_Update}. Suppose $\nu^{\eta}$ satisfies a $\Phi$-Sobolev inequality with optimal constant $\alpha$, $\nu$ is $L$-smooth, and $\eta \leq \frac{1}{L}$. Then,
    \[
    \varepsilon_{\sfD_{\Phi}}(\bP, \nu^\eta) \leq \frac{(1+\eta L)^2}{(1+\eta L)^2 + 2\alpha \eta}\,.
    \]
\end{lemma}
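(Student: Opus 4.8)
The plan is to exploit the factorization of the ULA kernel implicit in the update~\eqref{eq:ULA_Update}: applying $\bP$ to a law amounts to first pushing forward by $F = \id - \eta \nabla f$ and then convolving with $\N(0, 2\eta I)$. Writing $\mu_t \coloneqq F_\# \mu * \N(0, tI)$ and $\nu_t \coloneqq F_\# \nu^\eta * \N(0, tI)$, I want to bound the ratio $\sfD_\Phi(\mu_{2\eta} \dvert \nu_{2\eta})/\sfD_\Phi(\mu \dvert \nu^\eta)$ uniformly in $\mu$, since its supremum over $\mu$ is exactly $\varepsilon_{\sfD_\Phi}(\bP, \nu^\eta)$ by Definition~\ref{def:ContractionCoefficient_PhiDivergence}. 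The first move is to remove the pushforward from the denominator: if $F$ is a bijection, then the simultaneous-bijection invariance noted in the footnote gives $\sfD_\Phi(\mu \dvert \nu^\eta) = \sfD_\Phi(F_\# \mu \dvert F_\# \nu^\eta) = \sfD_\Phi(\mu_0 \dvert \nu_0)$. Hence it suffices to control $\sfD_\Phi(\mu_{2\eta} \dvert \nu_{2\eta})/\sfD_\Phi(\mu_0 \dvert \nu_0)$, i.e.\ the contraction of $\Phi$-divergence along the Gaussian-convolution (heat) flow from time $0$ to $2\eta$.

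First I would record the geometric properties of $F$ coming from the hypotheses. Since $\nu$ is $L$-smooth, $-LI \preceq \nabla^2 f \preceq LI$, so with $\eta \le 1/L$ we get $(1-\eta L)I \preceq \nabla F = I - \eta \nabla^2 f \preceq (1+\eta L)I$; thus $F$ is $(1+\eta L)$-Lipschitz and (for $\eta < 1/L$, with the endpoint handled by a limiting argument) a bijection. The Lipschitz bound feeds the isoperimetry of the reference flow: by Lemma~\ref{lem:PhiSIchangeAlongLipschitzPushforward}, $F_\# \nu^\eta$ satisfies $\PSI$ with constant at least $\alpha/(1+\eta L)^2$; and $\N(0, tI)$, being $\tfrac1t$-strongly log-concave, satisfies $\PSI$ with constant at least $1/t$ by Lemma~\ref{lem:PhiSISLC}. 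Combining these through the convolution bound of Lemma~\ref{lem:PhiSIchangeAlongConvolution} yields
\[
\alpha_{\PSI}(\nu_t) \ge \frac{\alpha}{(1+\eta L)^2 + \alpha t}.
\]

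For the heat-flow contraction I would apply Lemma~\ref{lem:SimultaneousSDE} with $b_t = 0$ and $c = \tfrac12$, for which $\law(X_t) = \law(X_0) * \N(0, tI)$, giving $\frac{\D}{\D t}\sfD_\Phi(\mu_t \dvert \nu_t) = -\tfrac12 \FI_\Phi(\mu_t \dvert \nu_t)$. Feeding in the $\PSI$ of $\nu_t$ just obtained produces the differential inequality
\[
\frac{\D}{\D t}\sfD_\Phi(\mu_t \dvert \nu_t) \le -\frac{\alpha}{(1+\eta L)^2 + \alpha t}\, \sfD_\Phi(\mu_t \dvert \nu_t),
\]
and Grönwall's lemma integrated over $[0, 2\eta]$ yields the factor
\[
\exp\!\left(-\int_0^{2\eta} \frac{\alpha}{(1+\eta L)^2 + \alpha t}\,\D t\right) = \frac{(1+\eta L)^2}{(1+\eta L)^2 + 2\alpha\eta},
\]
which is exactly the claimed bound and is independent of $\mu$.

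The main obstacle is the bijectivity of $F$, which underpins the crucial invariance step: at the endpoint $\eta = 1/L$ the lower eigenvalue bound $1-\eta L$ degenerates to $0$, so $F$ need not be a diffeomorphism there and I would have to treat $\eta = 1/L$ by continuity in $\eta$ (or check that injectivity plus properness already suffices for the invariance). A secondary point to watch is that the $\PSI$ of $\nu_t$ must hold for \emph{every} $t \in [0, 2\eta]$, not merely at the endpoints, for the Grönwall step to be legitimate, and that both the differentiation and the $\Phi$-Sobolev inequality are invoked on the pair $(\mu_t, \nu_t)$ with $\nu_t$ in the reference slot; both are delivered by the lemmas above, but they must be applied pointwise along the flow.
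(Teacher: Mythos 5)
Your proof is correct and follows essentially the same route as the paper's: factor $\bP$ into the pushforward by $F$ followed by Gaussian convolution, track $\alpha_{\PSI}(\nu_t)$ along the heat flow via Lemmas~\ref{lem:PhiSIchangeAlongLipschitzPushforward}, \ref{lem:PhiSIchangeAlongConvolution}, and~\ref{lem:PhiSISLC}, and integrate the resulting differential inequality from Lemma~\ref{lem:SimultaneousSDE} over $[0,2\eta]$ with Gr\"onwall. Your caution about bijectivity of $F$ at $\eta=1/L$ is a point the paper glosses over, but note that the data processing inequality already gives $\sfD_{\Phi}(F_{\#}\mu \dvert F_{\#}\nu^{\eta}) \leq \sfD_{\Phi}(\mu \dvert \nu^{\eta})$ unconditionally, which is the only direction needed for the upper bound on the ratio, so the endpoint requires no separate limiting argument.
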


Lemma~\ref{lem:ULA_Mixing_ContractionCoefficient} gives a bound on the contraction coefficient for ULA, with which we can prove Theorem~\ref{thm:ULA_PhiMixing}.

\begin{proof}\textbf{of Theorem~\ref{thm:ULA_PhiMixing}}
    By an application of Definition~\ref{def:SDPI_PhiDivergence} and by noting that $\nu^\eta$ is stationary for $\bP$, we have that,
    \[
    \sfD_{\Phi}(\rho_k \dvert \nu^{\eta}) \leq \varepsilon_{\sfD_{\Phi}}(\bP, \nu^{\eta})^k \, \sfD_{\Phi}(\rho_0 \dvert \nu^{\eta})\,.
    \]
    Using Lemma~\ref{lem:ULA_Mixing_ContractionCoefficient}, we get that,
    \begin{align*}
        \sfD_{\Phi}(\rho_k \dvert \nu^{\eta}) &\leq \left(\frac{(1+\eta L)^2}{(1+\eta L)^2 + 2\alpha \eta}\right)^{k} \, \sfD_{\Phi}(\rho_0 \dvert \nu^{\eta}) = \left( 1 + \frac{2\alpha \eta}{(1+\eta L)^2}  \right)^{-k} \, \sfD_{\Phi}(\rho_0 \dvert \nu^{\eta})\,.
    \end{align*}
\end{proof}

An explicit calculation for the Ornstein-Uhlenbeck process implying the tightness of Theorem~\ref{thm:ULA_PhiMixing} for KL divergence can be found in Proposition~\ref{prop:OU_ULA} in Appendix~\ref{app:OU_ULA}.

\subsection{Property of the biased limit}
\label{subsec:PropBiased}

We conclude this section by showing that the biased limit satisfies a $\Phi$-Sobolev inequality under strong log-concavity of $\nu$.
We prove Lemma~\ref{lem:BiasedLimitProperty} in Appendix~\ref{app:PfOfBiasedLimitProperty}.

\begin{lemma}\label{lem:BiasedLimitProperty}
    Suppose $\nu \propto \exp{(-f)}$ is $\alpha$-strongly log-concave and $L$-smooth. Consider the ULA~\eqref{eq:ULA_RandomVariableUpdate} to sample from $\nu$ with step size $\eta \leq \frac{1}{L}$. Then the biased limit $\nu^{\eta}$ satisfies:
    \[
    \alpha_{\PSI}(\nu^{\eta}) \geq \frac{\alpha}{2}\,.
    \]
\end{lemma}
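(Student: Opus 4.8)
The plan is to exploit the fixed-point characterization of the biased limit together with the three structural lemmas for the $\PSI$ constant. Since $\nu^{\eta}$ is stationary for the ULA update~\eqref{eq:ULA_Update}, it satisfies the self-consistency relation $\nu^{\eta} = F_{\#}\nu^{\eta} * \N(0, 2\eta I)$ with $F = \id - \eta \nabla f$. The idea is to read off a $\PSI$ constant for each factor on the right-hand side, push it through the Lipschitz-pushforward and convolution lemmas, and obtain an inequality that $\beta := \alpha_{\PSI}(\nu^{\eta})$ must satisfy, which then pins down the lower bound.

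First I would compute the Lipschitz constant of $F$. Strong log-concavity and smoothness give $\alpha I \preceq \nabla^2 f \preceq L I$, so $DF = I - \eta \nabla^2 f$ obeys $(1 - \eta L) I \preceq DF \preceq (1 - \eta \alpha) I$; since $\eta \leq 1/L$ and $\alpha \leq L$ both endpoints are nonnegative, whence $\|DF\|_{\op} \leq 1 - \eta \alpha$ and $F$ is $(1-\eta\alpha)$-Lipschitz. By Lemma~\ref{lem:PhiSIchangeAlongLipschitzPushforward}, $\alpha_{\PSI}(F_{\#}\nu^{\eta}) \geq \beta/(1-\eta\alpha)^2$. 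The Gaussian $\N(0,2\eta I)$ is $\tfrac{1}{2\eta}$-strongly log-concave, so by Lemma~\ref{lem:PhiSISLC} it satisfies $\PSI$ with constant at least $\tfrac{1}{2\eta}$. Feeding these two lower bounds into the convolution Lemma~\ref{lem:PhiSIchangeAlongConvolution} applied to $\nu^{\eta} = F_{\#}\nu^{\eta} * \N(0,2\eta I)$ yields
\[
\frac{1}{\beta} \leq \frac{(1-\eta\alpha)^2}{\beta} + 2\eta .
\]
Since $1 - (1-\eta\alpha)^2 = \eta\alpha(2 - \eta\alpha)$, rearranging gives $\eta\alpha(2-\eta\alpha)/\beta \leq 2\eta$, i.e.\ $\beta \geq \tfrac{\alpha(2-\alpha\eta)}{2}$; the final inequality $\tfrac{\alpha(2-\alpha\eta)}{2} \geq \tfrac{\alpha}{2}$ is immediate because $\alpha\eta \leq L\eta \leq 1$ forces $2 - \alpha\eta \geq 1$.

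The main obstacle is that the rearrangement above is only legitimate once we know $\beta \in (0,\infty)$: if $\nu^{\eta}$ satisfied no $\Phi$-Sobolev inequality then $\beta = 0$, $1/\beta = \infty$, and the self-consistency inequality would be vacuous. I would remove this circularity by running the identical computation along the ULA iterates rather than at the fixed point. Starting from a strongly log-concave $\rho_0$ (for instance $\rho_0 = \nu$, which by Lemma~\ref{lem:PhiSISLC} has $\alpha_{\PSI}(\rho_0) \geq \alpha$) and using $\rho_{k+1} = F_{\#}\rho_k * \N(0,2\eta I)$, the same two lemmas give the recursion $1/\alpha_{\PSI}(\rho_{k+1}) \leq (1-\eta\alpha)^2/\alpha_{\PSI}(\rho_k) + 2\eta$. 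The affine map $u \mapsto (1-\eta\alpha)^2 u + 2\eta$ is an increasing contraction with fixed point $\tfrac{2}{\alpha(2-\alpha\eta)}$, and since $1/\alpha_{\PSI}(\rho_0) = 1/\alpha$ lies below this fixed point the iterates stay below it; hence $\alpha_{\PSI}(\rho_k) \geq \tfrac{\alpha(2-\alpha\eta)}{2}$ uniformly in $k$. As $\rho_k \to \nu^{\eta}$, passing this uniform bound to the limit establishes both $\beta > 0$ and the claimed inequality. The one genuinely delicate point is justifying this limit, i.e.\ lower semicontinuity of the $\Phi$-Sobolev constant along the convergence of the ULA iterates; everything else is bookkeeping with the three cited lemmas.
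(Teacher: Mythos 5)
Your first paragraph is precisely the paper's proof: the same fixed-point identity $\nu^{\eta} = F_{\#}\nu^{\eta} * \N(0,2\eta I)$, the same Lipschitz bound $\|F\|_{\Lip}\le 1-\alpha\eta$, and the same combination of Lemmas~\ref{lem:PhiSIchangeAlongLipschitzPushforward}, \ref{lem:PhiSIchangeAlongConvolution}, and~\ref{lem:PhiSISLC} leading to $\beta \ge \frac{1-(1-\alpha\eta)^2}{2\eta}$. The circularity you flag (the rearrangement being vacuous if $\beta=0$) is a genuine point that the paper's one-line proof silently elides, and your patch via the iterate recursion $1/\alpha_{\PSI}(\rho_{k+1}) \le (1-\eta\alpha)^2/\alpha_{\PSI}(\rho_k) + 2\eta$ is a reasonable way to close it, modulo the limit-passage you correctly identify as the remaining delicate step.
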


\section{Convergence Along Proximal Sampler}\label{sec:ConvergenceAlongProximal}

Recall the Proximal Sampler from Section~\ref{subsec:Introduction_Proximal} with update given by~\eqref{eqs:ProximalSampler}, with the forward and backward steps.
We will denote the Proximal Sampler as $\bP_{\prox} = \bP_{\prox}^+ \bP_{\prox}^-$ where $\bP_{\prox}^+$ corresponds to the forward step and $\bP_{\prox}^-$ to the backward step.
Each step of the Proximal Sampler is an update on $\R^d$, and so this perspective via composition of Markov kernels is valid. 
In terms of notation, we have $\rho_k^X \coloneqq \law (X_k)$, $\rho_k^Y \coloneqq \law (Y_k)$, and therefore $\rho^X_k \, \bP_{\prox} = \rho^X_{k+1}$\,, $\rho^X_k\,  \bP_{\prox}^+ = \rho^Y_{k}$\,, and $\rho_k^Y \bP_\prox^- = \rho_{k+1}^X$.

As mentioned in Section~\ref{subsec:Introduction_Proximal}, we know that $\nu^X$ is stationary for the Proximal Sampler, and from~\eqref{eq:SDPIForMixing}, our goal is to then bound the contraction coefficient $\varepsilon_{\sfD_{\Phi}}(\bP_{\prox}, \nu^X)$. The following lemma shows that this can be bounded by the product of the contraction coefficients of the forward and backward steps.

\begin{lemma}\label{lem:ProximalProductOfCoefficients}
    Let $\bP_{\prox} = \bP_{\prox}^+ \bP_{\prox}^-$ denote the Proximal Sampler~\eqref{eqs:ProximalSampler} with joint stationary distribution $\nu^{XY}$. Then,
    \begin{equation*}
        \varepsilon_{\sfD_{\Phi}}(\bP_{\prox}, \nu^X) \leq \varepsilon_{\sfD_{\Phi}}(\bP_{\prox}^+, \nu^X)\,\varepsilon_{\sfD_{\Phi}}(\bP_{\prox}^-, \nu^Y)\,.
    \end{equation*}
\end{lemma}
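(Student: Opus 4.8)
The plan is to exploit the fact that the contraction coefficient is submultiplicative under composition of Markov kernels, provided the reference measure is transported consistently through the two steps. Concretely, I would fix an arbitrary distribution $\mu^X$ on the $X$-space with $0 < \sfD_\Phi(\mu^X \dvert \nu^X) < \infty$ and denote by $\mu^Y \coloneqq \mu^X \bP_{\prox}^+$ its image after the forward step. The two structural facts I need are that the forward step carries the $X$-stationary measure to the $Y$-stationary measure, $\nu^X \bP_{\prox}^+ = \nu^Y$, and that the backward step carries it back, $\nu^Y \bP_{\prox}^- = \nu^X$; both follow immediately from the fact that $\nu^{XY}$ in~\eqref{eq:JointTargetForProximal} is the joint stationary distribution of the Gibbs sampler, whose $X$- and $Y$-marginals are $\nu^X$ and $\nu^Y$ respectively.

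Given these, I would write $\mu^X \bP_{\prox} = \mu^Y \bP_{\prox}^-$ and $\nu^X = \nu^Y \bP_{\prox}^-$, so that
\[
\sfD_\Phi(\mu^X \bP_{\prox} \dvert \nu^X) = \sfD_\Phi(\mu^Y \bP_{\prox}^- \dvert \nu^Y \bP_{\prox}^-).
\]
Applying the SDPI form~\eqref{eq:SDPI_PhiDivergence} for the backward kernel $(\bP_{\prox}^-, \nu^Y)$ bounds the right-hand side by $\varepsilon_{\sfD_\Phi}(\bP_{\prox}^-, \nu^Y)\,\sfD_\Phi(\mu^Y \dvert \nu^Y)$. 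Then, since $\mu^Y = \mu^X \bP_{\prox}^+$ and $\nu^Y = \nu^X \bP_{\prox}^+$, a second application of~\eqref{eq:SDPI_PhiDivergence} for the forward kernel $(\bP_{\prox}^+, \nu^X)$ bounds $\sfD_\Phi(\mu^Y \dvert \nu^Y)$ by $\varepsilon_{\sfD_\Phi}(\bP_{\prox}^+, \nu^X)\,\sfD_\Phi(\mu^X \dvert \nu^X)$. Chaining the two gives
\[
\sfD_\Phi(\mu^X \bP_{\prox} \dvert \nu^X) \leq \varepsilon_{\sfD_\Phi}(\bP_{\prox}^+, \nu^X)\,\varepsilon_{\sfD_\Phi}(\bP_{\prox}^-, \nu^Y)\,\sfD_\Phi(\mu^X \dvert \nu^X),
\]
and dividing by $\sfD_\Phi(\mu^X \dvert \nu^X) > 0$ and taking the supremum over admissible $\mu^X$ yields the claim.

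The few points that require care are all bookkeeping rather than substance. First, I should confirm that the intermediate divergence $\sfD_\Phi(\mu^Y \dvert \nu^Y)$ is itself finite, which is immediate from the ordinary data processing inequality applied to $\bP_{\prox}^+$. Second, I would check that the SDPI inequality~\eqref{eq:SDPI_PhiDivergence} is legitimately available here: it holds by the definition of $\varepsilon_{\sfD_\Phi}$ as a supremum of ratios whenever the reference divergence is strictly positive, and holds trivially when it vanishes, since strict convexity of $\Phi$ forces the two arguments to coincide (hence both sides are zero). This disposes of the degenerate case $\mu^Y = \nu^Y$, where a naive multiply-and-divide argument would otherwise be ambiguous. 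I expect these edge cases, rather than the main inequality, to be the only real obstacle; the core argument is a clean two-step application of the strong data processing inequality with the reference measure threaded through $\nu^X \mapsto \nu^Y \mapsto \nu^X$.
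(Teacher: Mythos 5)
Your proof is correct and follows essentially the same route as the paper: decompose the two-step transition through the intermediate measure $\mu^X\bP_{\prox}^+$, bound each factor by the corresponding contraction coefficient, and identify $\nu^X\bP_{\prox}^+ = \nu^Y$. Your explicit handling of the degenerate case $\sfD_\Phi(\mu^Y \dvert \nu^Y) = 0$ is a small refinement over the paper's multiply-and-divide presentation, but the substance is identical.
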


\begin{proof}
    By Definition~\ref{def:ContractionCoefficient_PhiDivergence}, 
    \begin{align*}
        \varepsilon_{\sfD_{\Phi}}(\bP_{\prox}, \nu^X) &= \sup_{\mu} \frac{\sfD_{\Phi}(\mu \bP_\prox \dvert \nu^X \bP_\prox)}{\sfD_{\Phi}(\mu \dvert \nu^X)} \\
        &= \sup_{\mu} \left[\frac{\sfD_{\Phi}(\mu \bP^+_\prox \dvert \nu^X \bP^+_\prox)}{\sfD_{\Phi}(\mu \dvert \nu^X)} \times \frac{\sfD_{\Phi}(\mu \bP^+_\prox \bP^-_\prox \dvert \nu^X \bP^+_\prox \bP^-_\prox)}{\sfD_{\Phi}(\mu \bP^+_\prox \dvert \nu^X \bP^+_\prox)}\right]\\
        &\leq \varepsilon_{\sfD_{\Phi}}(\bP_{\prox}^+, \nu^X)\,\varepsilon_{\sfD_{\Phi}}(\bP_{\prox}^-, \nu^X \bP^+_\prox)\,.
    \end{align*}
    The claim follows by noting that $\nu^Y = \nu^X \bP^+_\prox$ (which is further made clear in Section~\ref{subsec:ProximalFwdStep}).
\end{proof}

In light of Lemma~\ref{lem:ProximalProductOfCoefficients}, we will bound each of the contraction coefficients separately and get our convergence result. As each contraction coefficient can at most equal $1$ (as a consequence of data processing inequality), this perspective implies that showing that either coefficient is strictly less than $1$ yields a convergence guarantee in $\Phi$-divergence.
We will in fact show both coefficients for the forward step and the backward step are strictly less than $1$.

\subsection{Forward step}\label{subsec:ProximalFwdStep}

Without loss of generality consider $k=0$, and recall the forward step of the Proximal Sampler~\eqref{eqs:ProximalSampler}, where $X_0 \sim \rho_0^X$ and $Y_0 \mid X_0 \sim \N(X_0, \eta I)$.
To understand the forward step, we wish to understand $\rho_0^Y$. For any $y \in \R^d$, $\rho_0^Y(y) = \int \nu^{Y \mid X}(y \mid x)\rho_0^X(x) \D x = \int \frac{1}{(2\pi \eta)^{\frac{d}{2}}}\exp(-\frac{\|y-x\|^2}{2\eta}) \rho_0^X(x) \D x$, and therefore $\rho_0^Y = \rho_0^X * \N(0, \eta I)$. 
This implies that $\bP^+_{\prox}$ corresponds to evolving along the heat flow (i.e. $\D X_t = \D W_t$) for time $\eta$.  
And so, to get a control on $\varepsilon_{\sfD_{\Phi}}(\bP_{\prox}^+, \nu^X)$, we can use Lemma~\ref{lem:SimultaneousSDE} with $b_t = 0$ and $c=\frac{1}{2}$, just like was done for ULA (discussed in Section~\ref{sec:ConvergenceAlongULA}).
This leads to the following guarantee on the contraction coefficient. We prove Lemma~\ref{lem:ProximalFwdContractionCoefficient} in Appendix~\ref{app:PfOfProximalFwdContractionCoefficient}.

\begin{lemma}\label{lem:ProximalFwdContractionCoefficient}
    Let $\bP_{\prox} = \bP_{\prox}^+ \bP_{\prox}^-$ denote the Proximal Sampler~\eqref{eqs:ProximalSampler} with step size $\eta >0$, to sample from $\nu^X$ where $\nu^X$ satisfies a $\Phi$-Sobolev inequality with optimal constant $\alpha>0$. Then,
    \begin{equation*}
        \varepsilon_{\sfD_{\Phi}}(\bP_{\prox}^+, \nu^X) \leq \frac{1}{1+\alpha \eta}.
    \end{equation*}
\end{lemma}

\subsection{Backward step}\label{subsec:ProximalBwdStep}

We now focus on the backward step of the Proximal Sampler.
For the forward step, we were able to relate it to the Brownian motion SDE ($\D X_t = \D W_t$) by observing how it acts on distributions (i.e. on $\rho_0^X$, and seeing that $\rho_0^Y = \rho_0^X * \N(0, \eta I)$). 
For the backward step, this approach is not as clear. Indeed, writing $\rho^X_1(x) = \int \nu^{X \mid Y}(x \mid y)\rho_0^Y(y) \D y$ does not immediately yield an SDE interpretation.
By step 2 of the Proximal Sampler~\eqref{eqs:ProximalSampler}, we want to find an SDE such that when initialized from a point mass $\delta_y$ at any $y \in \R^d$, the output has distribution $\nu^{X \mid Y=y}$ at time $\eta$; and therefore in general, when initialized at $\nu^Y$ the SDE will output $\nu^X$, and when initialized at $\rho_0^Y$ the SDE will output $\rho^X_1$.
It turns out we can obtain this by reversing the heat flow path from $\nu^X$ to $\nu^Y$.

For the forward step, we have that $\D X_t = \D W_t$ where if $X_0 \sim \nu^X$, then $X_{\eta} \sim \nu^Y$. The time reversal of this SDE is called the \emph{backward heat flow} and is given by
\begin{equation}\label{eq:BackwardHeatFlow}
    \D Y_t = \nabla \log (\nu^X * \N_{\eta -t})(Y_t) \D t + \D W_t.
\end{equation}
By construction, if we start~\eqref{eq:BackwardHeatFlow} from $\nu^Y$, then for any $t \in [0,\eta]$, the marginal law along~\eqref{eq:BackwardHeatFlow} is the same as $\nu^X * \N_{\eta-t}$. 
Such a reverse SDE construction is popular in diffusion models~\cite{chen2022sampling}, and details regarding it can be found in~\cite{follmer2005entropy, cattiaux2023time}. Rigorous connections between~\eqref{eq:BackwardHeatFlow} and the Proximal Sampler can be found in~\cite[Appendix~A.1.2]{chen2022improved}; see also the exposition in~\cite[Chapter~8.3]{chewi2023log} and~\cite[Appendix~B.2]{kook2024inandout}.

The following lemma describes the contraction coefficient for the backward step. We provide the proof of Lemma~\ref{lem:ProximalBwdContractionCoefficient} in Appendix~\ref{app:PfOfProximalBwdContractionCoefficient}.

\begin{lemma}\label{lem:ProximalBwdContractionCoefficient}
    Let $\bP_{\prox} = \bP_{\prox}^+ \bP_{\prox}^-$ denote the Proximal Sampler~\eqref{eqs:ProximalSampler} with step size $\eta >0$, to sample from $\nu^X$ where $\nu^X$ satisfies a $\Phi$-Sobolev inequality with optimal constant $\alpha>0$. Then,
    \begin{equation*}
        \varepsilon_{\sfD_{\Phi}}(\bP_{\prox}^-, \nu^Y) \leq \frac{1}{1+\alpha \eta}.
    \end{equation*}
\end{lemma}

The reason that both Lemmas~\ref{lem:ProximalFwdContractionCoefficient} and~\ref{lem:ProximalBwdContractionCoefficient} have the same result is for two reasons. First, it is because Lemma~\ref{lem:SimultaneousSDE} does not depend on the drift $b_t$.
Second, it is because the backward heat flow is the time reversal of the forward heat flow SDE, and therefore by construction, there is a correspondence between the marginal distributions of the forward and backward processes in the sense that if we start~\eqref{eq:BackwardHeatFlow} from $\nu^Y$, then for any $t \in [0,\eta]$, the marginal law along~\eqref{eq:BackwardHeatFlow} is the same as $\nu^X * \N_{\eta-t}$.

\subsection{Proof of Theorem~\ref{thm:ProximalMain}}\label{subsec:ProximalMainThmProof}

Equipped with Lemmas~\ref{lem:ProximalFwdContractionCoefficient} and~\ref{lem:ProximalBwdContractionCoefficient}, the proof of Theorem~\ref{thm:ProximalMain} follows easily.

\begin{proof}\textbf{of Theorem~\ref{thm:ProximalMain}}
    By repeated application of Definition~\ref{def:SDPI_PhiDivergence} and by the fact that $\nu^X$ is stationary for the Proximal Sampler, we have that,
    \[
    \sfD_{\Phi}(\rho_k^X \dvert \nu^X) \leq \varepsilon_{\sfD_{\Phi}}(\bP_{\prox}, \nu^X)^k \,\sfD_{\Phi}(\rho_0^X \dvert \nu^X).
    \]
    Further using Lemmas~\ref{lem:ProximalProductOfCoefficients},~\ref{lem:ProximalFwdContractionCoefficient}, and~\ref{lem:ProximalBwdContractionCoefficient}, we get,
    \[
    \sfD_{\Phi}(\rho_k^X \dvert \nu^X) \leq \frac{\sfD_{\Phi}(\rho_0^X \dvert \nu^X)}{(1+\alpha \eta)^{2k}}.
    \]
\end{proof}

An explicit calculation for the Ornstein-Uhlenbeck process implying the tightness of Theorem~\ref{thm:ProximalMain} for KL divergence can be found in Proposition~\ref{prop:OU_Proximal} in Appendix~\ref{app:OU_ProximalSampler}.

\section{Discussion}\label{sec:Discussion}

We study the mixing time in $\Phi$-divergence for two popular discretizations of the Langevin dynamics, namely the Unadjusted Langevin Algorithm (ULA) and the Proximal Sampler. Our results show mixing to the stationary distributions of these Markov chains under the stationary distributions satisfying $\Phi$-Sobolev inequalities. As the Proximal Sampler is unbiased, this implies mixing to the target distribution $\nu$ of interest. However, this is not the case for the ULA, where our mixing guarantees are to the biased limit $\nu^{\eta}$ of ULA. While $\nu^{\eta}$ can be shown to satisfy a $\Phi$-Sobolev inequality under strong log-concavity of $\nu$ (Lemma~\ref{lem:BiasedLimitProperty}), it is interesting to study what can hold under weaker assumptions on $\nu$. 
For example one can ask if the fact that $\nu^{\eta}$ satisfies a $\Phi$-Sobolev inequality is closed under certain perturbations of $\nu$.
Alternatively, one can also ask for biased convergence guarantees of ULA to $\nu$ in $\Phi$-divergence directly.
One can also pose all of these questions for other Markov chains, such as Hamiltonian Monte Carlo and the underdamped Langevin algorithm.

Our results are based on strong data processing inequalities in $\Phi$-divergence, and proceed by bounding the respective contraction coefficients. However, different forms of (strong) data processing inequalities exist, namely in terms of mutual information and $\Phi$-mutual information. It is interesting to ask if our approaches can extend to these other forms of strong data processing inequalities, to yield for example, the convergence of the mutual information functional. Additionally, we require the function $\Phi$ to be twice differentiable, which prohibits many popular $\Phi$-divergences such as total variation distance and ``hockey-stick'' divergences~\cite{sason2016f}. These divergences are popular in differential privacy applications~\cite{asoodeh2020contraction} and it would therefore be instructive to study extensions to such divergences directly. 

Finally, our results imply mixing in all (twice-differentiable) $\Phi$-divergences for strongly log-concave target distributions. It is interesting to ask if mixing bounds can be obtained in a variety of $\Phi$-divergences under weaker absolute isoperimetric assumptions on $\nu$ (such as a log-Sobolev inequality, or in other words, a $\Phi$-Sobolev inequality with $\Phi(x) = x \log x$).

\paragraph{Acknowledgements}We thank Varun Jog, Sinho Chewi, and Yuliang Wang for helpful comments and references.
S.M. and A.W. were supported by NSF Award CCF \#2403391.

\appendix
\newpage

\section{Examples of $\Phi$-divergences}\label{app:ExamplesOfPhiDiv}

\begin{center}
\begin{table}[h!]
\begin{tabular}{cccc}
 \bm{$\Phi (x)$} & \bm{$\sfD_{\Phi}(\mu \dvert \nu)$} & \bm{$\sfD_{\Phi}(\mu \dvert \nu)$} \textbf{name}  & \textbf{\bm{$\Phi$}-Sobolev inequality} \\ 
 \\ \hline\\
 $x \log x$ & $\bigintsss \D \mu \log \frac{\D \mu}{\D \nu}$ &  KL divergence & log-Sobolev inequality \\
  \\
 $(x-1)^2$ & $\bigintsss \frac{(\D \mu - \D \nu)^2}{\D \nu}$ & chi-squared divergence & Poincaré inequality\\
 \\
 $\frac{1}{2}(\sqrt{x}-1)^2$ & $\frac{1}{2} \bigintsss (\sqrt{\D \mu} - \sqrt{\D \nu})^2$ & squared Hellinger distance & -- \\
 \\
 $\frac{1}{2}|x-1|$ & $\frac{1}{2}\bigintsss |\D \mu - \D \nu | $ & TV distance & -- \\
 \\
 $-\log x$ & $\bigintsss \D \nu \log \frac{\D \nu}{\D \mu}$ &  reverse KL divergence & -- \\
 \\
 $\frac{1}{x}-x$ & $2+\bigintsss \frac{(\D \mu - \D \nu)^2}{\D \mu}$ & reverse chi-squared divergence & --

\end{tabular}
\caption{Common $\Phi$ functions along with corresponding $\Phi$-divergences~\eqref{eq:PhiDivergence} and $\Phi$-Sobolev inequalities (Definition~\ref{def:PhiSobolevInequality}).\label{table:PhiExamples}}
\end{table}
\end{center}

\section{Restricted Gaussian Oracle}\label{app:RGO}

Here we show a basic implementation of the RGO via rejection sampling. As mentioned in Section~\ref{subsec:Introduction_Proximal}, enhanced implementations of the RGO under weaker assumptions have been an active area of research and we refer the reader to~\cite{fan2023improved} and the references therein.

Recall from Section~\ref{subsec:Introduction_Proximal} the conditional distribution the RGO seeks to sample from:
\[
\nu^{X \mid Y}(x \mid y) \propto_x \exp \left( -f(x) - \frac{\|x-y\|^2}{2\eta} \right).
\]
Define $g_y(x) \coloneqq  f(x) + \frac{\|x-y\|^2}{2\eta}$ so that for any fixed $y \in \R^d$, the target distribution for the RGO is $\tilde \nu_y (x) \propto \exp(-g_y(x))$.
Suppose the potential function $f$ is $L$-smooth and that $\eta < \frac{1}{L}$. In this case, $\tilde \nu_y$ is strongly log-concave with condition number $\frac{1+L\eta}{1-L\eta}$. and the RGO can be implemented efficiently via rejection sampling.

Suppose $\pi \propto \exp{(-V)}$ is $\beta$-strongly log-concave and $M$-smooth. The rejection sampling method to sample from $\pi$ is the following:
\begin{enumerate}
    \item Compute the minimizer $x^*$ of $V$, so that for any $z \in \R^d$, $V(z) \geq V(x^*) + \frac{\beta}{2}\|z-x^*\|^2$.
    \item Draw $Z \sim \N(x^*, \frac{1}{\beta}I)$ and accept it with probability
    \[
    \exp \left( -V(Z) + V(x^*) + \frac{\beta}{2}\|Z-x^*\|^2 \right).
    \]
    Repeat this until acceptance.
\end{enumerate}

The output of this method is distributed according to $\pi$ and the expected number of iterations is $(\frac{M}{\beta})^{d/2}$ \cite[Theorem~7]{chewi2022query}.

Applying this to sample from $\tilde \nu_y$ with $\eta \asymp \frac{1}{Ld}$ gives a valid implementation of the RGO under smoothness of $f$ with $\cO(1)$ many iterations in expectation. 
Specifically, $M = L + \frac{1}{\eta}$ and $\beta = -L + \frac{1}{\eta}$ and therefore, $\frac{M}{\beta} = \frac{1+L\eta}{1-L\eta}$. So if $\eta = \frac{1}{Ld}$, $(\frac{M}{\beta})^{d/2} = (1+\frac{2}{d-1})^{d/2} = \cO(1)$.

\section{Proof of Lemma~\ref{lem:SimultaneousSDE}}\label{app:ProofOfSimultaneousSDE}

\begin{proof}
Begin by recalling that if $X_t \sim \rho_t$ where $\D X_t = b_t(X_t) \D t + \sqrt{2c} \D W_t$, then $\rho_t : \R^d \to \R$ satisfies the Fokker-Planck equation, given by:
\[
\partial_t \rho_t = -\nabla \cdot (\rho_t \, b_t) + c \Delta \rho_t\,.
\]
Also note that using the identity $\Delta \rho = \nabla \cdot (\rho \nabla \log \rho)$, the above can be written as: 
\begin{equation}\label{eq:EquivalentPDEForms}
\partial_t \rho_t = -\nabla \cdot (\rho_t \, b_t) + c \nabla \cdot (\rho_t \nabla \log \rho_t)\,.
\end{equation}

We identify $\mu_t$ and $\nu_t$ with their densities with respect to Lebesgue measure, and further denote their relative density as $h_t = \frac{\mu_t}{\nu_t}$\,. We also assume enough regularity to take the differential under the integral sign and use $\int f g$ as a shorthand for $\int f(x) g(x) \D x$. Throughout the proof, we use integration by parts in various steps, denoted by (IBP).  With all of this in mind, we have the following:
\begin{align*}
    \partial_t\, \sfD_\Phi (\mu_t \dvert \nu_t) &= \partial_t \int \nu_t\, \Phi (h_t) \\
    &= \int (\partial_t \nu_t) \Phi (h_t)  + \int \nu_t\, \big( \partial_t\, \Phi (h_t) \big) \\
    &= \int (\partial_t \nu_t)  \Phi(h_t)  + \int \nu_t \Phi'(h_t) \frac{\nu_t\, \partial_t \mu_t - \mu_t\, \partial_t \nu_t}{\nu_t^2}  \\
    &= \underbrace{\int (\partial_t \nu_t)  \Phi(h_t)}_{T_1}  + \underbrace{\int \Phi'(h_t) (\partial_t \mu_t)}_{T_2}  - \underbrace{\int \Phi'(h_t) \frac{\mu_t}{\nu_t} (\partial_t \nu_t)}_{T_3}
\end{align*}
We will now handle each of these terms separately. 
We have:
\begin{align*}
    T_1 &= \int (\partial_t \nu_t)  \Phi(h_t)\\
    &\stackrel{\eqref{eq:EquivalentPDEForms}}= \int \left( -\nabla \cdot (\nu_t \, b_t) + c \nabla \cdot (\nu_t \nabla \log \nu_t) \right) \Phi(h_t)\\
    &= -\int \nabla \cdot (\nu_t \, b_t) \Phi(h_t) + c \int \nabla \cdot (\nu_t \nabla \log \nu_t) \Phi(h_t)\\
    &\stackrel{\text{(IBP)}}= \int \langle \nu_t b_t , \nabla (\Phi (h_t)) \rangle  -c \int \langle \nu_t \nabla \log \nu_t , \nabla (\Phi (h_t)) \rangle\\
    &=  \int \langle \nu_t b_t , \Phi' (h_t) \nabla \frac{\mu_t}{\nu_t} \rangle  -c \int \langle \nu_t \nabla \log \nu_t , \Phi'(h_t) \nabla \frac{\mu_t}{\nu_t} \rangle
\end{align*}

We also have:
\begin{align*}
    T_2 &= \int \Phi'(h_t) (\partial_t \mu_t) \\
    &\stackrel{\eqref{eq:EquivalentPDEForms}}= \int \Phi'(h_t) \left( -\nabla \cdot (\mu_t \, b_t) + c \nabla \cdot (\mu_t \nabla \log \mu_t)  \right)\\
    &\stackrel{\text{(IBP)}}= \int \langle \mu_t \, b_t, \nabla (\Phi'(h_t)) \rangle - c \int \langle \mu_t \nabla \log \mu_t, \nabla (\Phi'(h_t)) \rangle\\
    &= \int \langle \mu_t \, b_t, \Phi''(h_t) \nabla \frac{\mu_t}{\nu_t} \rangle -c \int \langle \mu_t \nabla \log \mu_t, \Phi''(h_t) \nabla \frac{\mu_t}{\nu_t} \rangle 
\end{align*}

We also have:
\begin{align*}
    T_3 &= \int \Phi'(h_t) \frac{\mu_t}{\nu_t} (\partial_t \nu_t)\\
    &\stackrel{\eqref{eq:EquivalentPDEForms}}= \int \Phi'(h_t) \frac{\mu_t}{\nu_t} \left( -\nabla \cdot (\nu_t \, b_t) + c \nabla \cdot (\nu_t \nabla \log \nu_t) \right)\\
    &\stackrel{\text{(IBP)}}= \int \left \langle \nu_t \, b_t, \nabla \left(\Phi'(h_t) \frac{\mu_t}{\nu_t} \right)  \right \rangle - c \int \left \langle \nu_t \nabla \log \nu_t, \nabla \left(\Phi'(h_t) \frac{\mu_t}{\nu_t} \right) \right \rangle\\
    &=  \int \langle \mu_t \, b_t, \Phi''(h_t) \nabla \frac{\mu_t}{\nu_t} \rangle + \int \langle \nu_t b_t , \Phi' (h_t) \nabla \frac{\mu_t}{\nu_t} \rangle\\
    &\hspace{0.4cm} -c \int \left \langle \mu_t \nabla \log \nu_t, \Phi''(h_t) \nabla \frac{\mu_t}{\nu_t} \right \rangle  - c \int \left \langle \nu_t \nabla \log \nu_t, \Phi'(h_t) \nabla \frac{\mu_t}{\nu_t} \right \rangle 
\end{align*}

Therefore, combining the above, we see many terms cancel and we have the following:
\begin{align*}
    \partial_t\, \sfD_\Phi (\mu_t \dvert \nu_t) &= T_1 + T_2 - T_3\\
    &= c\int \left \langle \mu_t \nabla \log \nu_t, \Phi''(h_t) \nabla \frac{\mu_t}{\nu_t} \right \rangle  -c \int \left \langle \mu_t \nabla \log \mu_t, \Phi''(h_t) \nabla \frac{\mu_t}{\nu_t} \right \rangle  \\
    &= - c\int  \left \langle  \nabla \log \frac{\mu_t}{\nu_t},  \Phi''(h_t) \nabla \frac{\mu_t}{\nu_t}    \right \rangle \mu_t \\
    &= -c\, \E_{\mu_t} \left[ \left \langle  \nabla \log \frac{\mu_t}{\nu_t},  \Phi''(h_t) \nabla \frac{\mu_t}{\nu_t}    \right \rangle \right]\\
    &= -c\, \E_{\nu_t} \left[ \frac{\mu_t}{\nu_t} \left \langle  \nabla \log \frac{\mu_t}{\nu_t},  \Phi''(h_t) \nabla \frac{\mu_t}{\nu_t}    \right \rangle \right]  \\
    &= -c\, \E_{\nu_t} \left[ \left \langle  \nabla \frac{\mu_t}{\nu_t},  \Phi''(h_t) \nabla \frac{\mu_t}{\nu_t}    \right \rangle \right]  \\
    &= -c \, \FI_\Phi(\mu_t \dvert \nu_t)\,,
\end{align*}
which proves the desired statement.
\end{proof}

\section{Proof Of Lemma~\ref{lem:ULA_Mixing_ContractionCoefficient}}\label{app:ProofOfLemmaULA_Mixing_ContractionCoefficient}

\begin{proof}
    Let $\mu$ be an arbitrary probability distribution such that $\sfD_{\Phi}(\mu \dvert \nu^{\eta}) < \infty$. As $\nu$ is $L$-smooth (i.e. $-L I \preceq \nabla^2 f \preceq LI$) and $\eta \leq \frac{1}{L}$, $F(x) = x - \eta \nabla f (x)$ is a bijective map. Furthermore, it also holds that $\|F\|_{\Lip} \leq 1 + \eta L$.

    Therefore, from Lemma~\ref{lem:PhiSIchangeAlongLipschitzPushforward}, we see that $\alpha_{\PSI}(F_{\#}\nu^\eta) \geq \frac{\alpha}{(1+\eta L)^2}$. 
    Now for $t \geq 0$, let $\mu_t \coloneqq F_{\#}\mu * \N(0, tI)$ and $\nu_t \coloneqq F_{\#}\nu^{\eta} * \N(0, tI)$.
    For $t \geq 0$, further denote $\alpha_{\PSI}(\nu_t) = \alpha_t$ as shorthand. Also note that $\N(0, tI)$ is $1/t$-strongly log-concave and therefore from Lemma~\ref{lem:PhiSISLC}, $\alpha_{\PSI}(\N(0, tI)) \geq 1/t$\,. Hence, Lemma~\ref{lem:PhiSIchangeAlongConvolution} implies that
    \[
    \alpha_t \geq \frac{\alpha}{(1+\eta L)^2 + \alpha t}\,.
    \]
    The rate of change of $\Phi$-divergence between $\mu_t$ and $\nu_t$ is given by Lemma~\ref{lem:SimultaneousSDE}. Applying Lemma~\ref{lem:SimultaneousSDE} for~\eqref{eq:GeneralSDE} with $b_t = 0$ and $c = \frac{1}{2}$, which is what the (Gaussian convolution) evolution of $\mu_t$ and $\nu_t$ corresponds to, implies,
    \begin{align*}
        \frac{\D}{\D t} \sfD_{\Phi}(\mu_t \dvert \nu_t) &= -\frac{1}{2}\FI_{\Phi}(\mu_t \dvert \nu_t)\\
        &\leq -\alpha_t \sfD_{\Phi}(\mu_t \dvert \nu_t),
    \end{align*}
    where the inequality follows from $\nu_t$ satisfying a $\Phi$-Sobolev inequality.
    Applying Grönwall's lemma and integrating the differential inequality from $0$ to $2\eta$ yields,
    \[
    \frac{\sfD_{\Phi}(\mu_{2\eta} \dvert \nu_{2\eta})}{\sfD_{\Phi}(\mu_{0} \dvert \nu_{0})} \leq \exp \left(  -\int_0^{2\eta} \alpha_t \D t \right) \leq \frac{(1+\eta L)^2}{(1+\eta L)^2 + 2\alpha \eta}\,.
    \]
    Finally, note that $\sfD_{\Phi}(\mu_{2\eta} \dvert \nu_{2\eta}) = \sfD_{\Phi}(\mu \bP \dvert \nu^{\eta} \bP)$ and $\sfD_{\Phi}(\mu_{0} \dvert \nu_{0}) = \sfD_{\Phi}(\mu \dvert \nu^{\eta})$, where the latter holds as $F$ is bijective. This, along with the fact that $\mu$ was arbitrary completes the proof.
\end{proof}

\section{Proof of Lemma~\ref{lem:BiasedLimitProperty}}\label{app:PfOfBiasedLimitProperty}

\begin{proof}
    Recall the ULA update in law~\eqref{eq:ULA_Update}. Under the assumptions of $\alpha$-strong log-concavity, $L$-smoothness, and $\eta \leq \frac{1}{L}$, it holds that $F(x) = x-\eta \nabla f(x)$ is a bijective map with $\|F\|_{\Lip} \leq 1-\alpha\eta$. Also note that $\N_{2\eta}$ is $\frac{1}{2\eta}$-SLC and therefore satisfies a $\Phi$-Sobolev inequality with the same constant (Lemma~\ref{lem:PhiSISLC}).
    Suppose $\rho_k$ satisfies $\alpha_k$-$\PSI$. Then Lemmas~\ref{lem:PhiSIchangeAlongLipschitzPushforward} and~\ref{lem:PhiSIchangeAlongConvolution} imply that
    \[
    \frac{1}{\alpha_{k+1}} \leq \frac{(1-\alpha \eta)^2}{\alpha_k} + 2\eta\,.
    \]
    Therefore, suppose we start from $\rho_0$ such that $\alpha_0 \geq \frac{\alpha}{2}$. Then by induction $\alpha_k \geq \frac{\alpha}{2}$ for all $k \geq 0$. Indeed, let us show that if $\alpha_k \geq \frac{\alpha}{2}$, then $\alpha_{k+1} \geq \frac{\alpha}{2}$. This follows as:
    \[
    \frac{1}{\alpha_{k+1}} \leq \frac{2(1-\alpha \eta)^2}{\alpha} + 2\eta = \frac{2}{\alpha} (1-\alpha\eta(1-\alpha\eta)) \leq \frac{2}{\alpha}\,.
    \]
    Therefore, taking $k \to \infty$, we get that for the limiting distribution, $\alpha_{\PSI}(\nu^{\eta}) \geq \frac{\alpha}{2}$.
\end{proof}

\section{Deferred Proofs From Section~\ref{sec:ConvergenceAlongProximal}}

\subsection{Proof of Lemma~\ref{lem:ProximalFwdContractionCoefficient}}\label{app:PfOfProximalFwdContractionCoefficient}

\begin{proof}
    Let $\mu$ be an arbitrary distribution such that $\sfD_{\Phi}(\mu \dvert \nu^X) < \infty$. Define $\mu_t \coloneqq \mu * \N_t$ and $\nu_t \coloneqq \nu^X * \N_t$. Further denote $\alpha_t$ as shorthand for $\alpha_{\PSI}(\nu_t)$. By assumption, we know that $\nu^X$ satisfies a $\Phi$-Sobolev inequality with optimal constant $\alpha$. Also note that $\N_t$ is $\frac{1}{t}$-SLC and therefore satisfies a $\Phi$-Sobolev inequality with the same constant (Lemma~\ref{lem:PhiSISLC}). Therefore by Lemma~\ref{lem:PhiSIchangeAlongConvolution} we have that
    \begin{equation}\label{eq:PSIAlongHeatFlow}
    \alpha_t \geq \frac{\alpha}{1+\alpha t}.
    \end{equation}
    Hence, applying Lemma~\ref{lem:SimultaneousSDE} on the SDE $\D X_t = \D W_t$ gives us,
    \[
    \frac{\D}{\D t} \sfD_{\Phi}(\mu_t \dvert \nu_t) = - \frac{1}{2} \FI_{\Phi}(\mu_t \dvert \nu_t)\,.
    \]
    We can then apply the $\Phi$-Sobolev inequality of $\nu_t$ and integrate the differential inequality from $0$ to $\eta$ to yield
    \[
    \frac{\sfD_{\Phi}(\mu_{\eta} \dvert \nu_{\eta})}{\sfD_{\Phi}(\mu_{0} \dvert \nu_{0})} \leq \exp \left( -\int_0^\eta \alpha_t \D t \right) \leq \frac{1}{1+\alpha \eta}.
    \]
    Observe that $\sfD_{\Phi}(\mu_{\eta} \dvert \nu_{\eta}) = \sfD_{\Phi}(\mu \bP^+ \dvert \nu^X \bP^+)$ and that $\sfD_{\Phi}(\mu_{0} \dvert \nu_{0}) = \sfD_{\Phi}(\mu \dvert \nu^X)$. As $\mu$ was arbitrary, this gives a valid bound on the contraction coefficient and concludes the proof.
\end{proof}

\subsection{Proof of Lemma~\ref{lem:ProximalBwdContractionCoefficient}}\label{app:PfOfProximalBwdContractionCoefficient}

\begin{proof}
    The proof follows similarly to that of Lemma~\ref{lem:ProximalFwdContractionCoefficient}.
    Let $\mu$ be an arbitrary distribution such that $\sfD_{\Phi}(\mu \dvert \nu^Y) < \infty$. Define $\mu_t$ to be the marginal law at time $t$ when starting the SDE~\eqref{eq:BackwardHeatFlow} from $\mu$, and let $\nu_t$ be the marginal law at time $t$ when starting the SDE~\eqref{eq:BackwardHeatFlow} from $\nu^Y$. Further denote $\alpha_t$ as shorthand for $\alpha_{\PSI}(\nu_t)$. As by construction of the SDE~\eqref{eq:BackwardHeatFlow}, $\nu_t = \nu^X *\N_{\eta - t}$, the same argument used to derive~\eqref{eq:PSIAlongHeatFlow} yields that
    \[
    \alpha_t \geq \frac{\alpha}{1+\alpha (\eta-t)}.
    \]
    Hence, applying Lemma~\ref{lem:SimultaneousSDE} on the SDE~\eqref{eq:BackwardHeatFlow} gives us,
    \[
    \frac{\D}{\D t} \sfD_{\Phi}(\mu_t \dvert \nu_t) = - \frac{1}{2} \FI_{\Phi}(\mu_t \dvert \nu_t)\,.
    \]
    We can then apply the $\Phi$-Sobolev inequality of $\nu_t$ and integrate the differential inequality from $0$ to $\eta$ to yield
    \[
    \frac{\sfD_{\Phi}(\mu_{\eta} \dvert \nu_{\eta})}{\sfD_{\Phi}(\mu_{0} \dvert \nu_{0})} \leq \exp \left( -\int_0^\eta \alpha_t \D t \right) \leq \frac{1}{1+\alpha \eta}.
    \]
    Observe that $\sfD_{\Phi}(\mu_{\eta} \dvert \nu_{\eta}) = \sfD_{\Phi}(\mu \bP^- \dvert \nu^Y \bP^-)$ and that $\sfD_{\Phi}(\mu_{0} \dvert \nu_{0}) = \sfD_{\Phi}(\mu \dvert \nu^Y)$. As $\mu$ was arbitrary, this gives a valid bound on the contraction coefficient and concludes the proof.
\end{proof}

\section{Ornstein-Uhlenbeck Process}\label{app:OU}

Recall the Langevin dynamics~\eqref{eq:LangevinDynamicsSDE} to sample from $\nu \propto \exp{(-f)}$. When $\nu$ is a Gaussian, the Langevin dynamics is known as the Ornstein-Uhlenbeck process.
For example, when $\nu = \N(0, \frac{1}{\alpha}I)$, the Langevin dynamics~\eqref{eq:LangevinDynamicsSDE} becomes:
\[
\D X_t = -\alpha X_t \D t + \sqrt{2} \D W_t\,.
\]
When the initial distribution $\rho_0$ is a Gaussian, the marginal distributions $\rho_t$ admit convenient Gaussian forms.
We will leverage this to show that Theorems~\ref{thm:ULA_PhiMixing} and~\ref{thm:ProximalMain} are tight for KL divergence.

Throughout, let $\nu = \N(0, \frac{1}{\alpha}I)$ for some $\alpha >0$. Recall that this is $\alpha$-strongly log-concave, and hence also satisfies a $\Phi$-Sobolev inequality with the same constant (Lemma~\ref{lem:PhiSISLC}).

\subsection{ULA evolution}\label{app:OU_ULA}

The ULA update~\eqref{eq:ULA_RandomVariableUpdate} for $\nu = \N(0, \frac{1}{\alpha}I)$ takes the following form:
\begin{equation}
    X_{k+1} = (1-\alpha\eta)X_k + \sqrt{2\eta} Z_k\,,\label{eq:OU_ULA}
\end{equation}
and the solution to \eqref{eq:OU_ULA} is:
\begin{equation}\label{eq:OU_ULA_solution}
    X_k = (1-\alpha\eta)^k X_0 + \sqrt{\frac{2(1-(1-\alpha\eta)^{2k})}{\alpha (2-\alpha\eta)}}Z\,,
\end{equation}
where $Z \sim \N(0, I)$\,.
In this case, the biased limit $\nu^\eta$ is:
\begin{equation}\label{eq:BiasedLimitOUULA}
    \nu^{\eta} = \N\left( 0, \frac{2}{\alpha(2-\alpha\eta)}I  \right).
\end{equation}
To show a tightness result, we simply take $\rho_0 = \N(\bm 1, I)$ where $\bm 1$ is the $d$-dimensional all-ones vector.

\begin{proposition}\label{prop:OU_ULA}
    Consider the ULA~\eqref{eq:ULA_RandomVariableUpdate} for $\nu = \N(0, \frac{1}{\alpha}I)$ with $\rho_0 = \N(\bm 1, I)$ and $\eta \leq \frac{1}{\alpha}$. 
    Then, $\KL(\rho_k \dvert \nu^{\eta}) = \cO(d\alpha (1-\eta \alpha)^{2k})$.
\end{proposition}

\begin{proof}
    From~\eqref{eq:OU_ULA_solution}, we can see that $X_k \sim \rho_k$ where
\begin{equation}\label{eq:OU_ULA_rhok}
    \rho_k = \N \left( (1-\eta\alpha)^k \, \bm 1, \frac{2+(1-\eta\alpha)^{2k}(2\alpha-\eta\alpha^2-2)}{\alpha (2-\eta \alpha)}I      \right).
\end{equation}
Taking $k \to \infty$ in~\eqref{eq:OU_ULA_rhok}, we see that the biased limit is indeed given by~\eqref{eq:BiasedLimitOUULA}, i.e.,
\begin{equation*}
    \nu^{\eta} = \N\left( 0, \frac{2}{\alpha(2-\alpha\eta)}I  \right).
\end{equation*}
Therefore, using the formula for KL divergence between two multivariate Gaussians, we get 
\begin{equation*}
    \KL(\rho_k \dvert \nu^{\eta}) = \frac{d}{2} \left[  \frac{(1-\eta \alpha)^{2k} \alpha (2-\eta\alpha)}{2} + \frac{(1-\eta\alpha)^{2k}(2\alpha - \eta \alpha^2 -2)}{2} - \log \Big(1+ \frac{(1-\eta\alpha)^{2k}(2\alpha - \eta \alpha^2 -2)}{2}\Big) \right].
\end{equation*}
This simplifies to reveal that
\begin{equation*}
    \KL(\rho_k \dvert \nu^{\eta}) = \cO(d\alpha (1-\eta \alpha)^{2k}).
\end{equation*}
\end{proof}

\subsection{Proximal Sampler evolution}\label{app:OU_ProximalSampler}

Similar to the ULA setting in Appendix~\ref{app:OU_ULA}, we will take the target distribution to be $\nu^X = \N(0, \frac{1}{\alpha}I)$ and the starting distribution to be $\rho_0^X = \N(\bm 1, I)$ where $\bm 1$ is the $d$-dimensional all-ones vector.

The Proximal Sampler update~\eqref{eqs:ProximalSampler} reveals that $\rho_0^Y = \N(\bm 1, (1 + \eta)I)$.
Moreover, we also have that for any $y \in \R^d$,
\[
\nu^{X \mid Y} (\cdot \mid y) = \N \big( \frac{y}{1+\eta \alpha}, \frac{\eta}{1+\eta\alpha}I \big),
\]
and consequently, as $\rho_{1}^X(x) = \int \nu^{X \mid Y}(x\mid y) \rho_0^Y(y) \D y$, $\rho_1^X$ is Gaussian. This reasoning extends to show that all of the iterates $\{\rho_k^X\}$ and $\{\rho_k^Y\}$ for $k\geq0$ are Gaussian. 
Denoting $\rho_k^X = \N(m_k, c_kI)$, where for all $k\geq 0$, $m_k \in \R^d$ and $c_k > 0$, we have that,
\[
m_{k+1} = \frac{m_k}{1+\eta \alpha} \text{ \,\,and\,\, } c_{k+1} = \frac{1}{(1+\eta \alpha)^2}\big(c_k - \frac{1}{\alpha}\big) + \frac{1}{\alpha}\,.
\]
Therefore, as $m_0 = \bm 1$ and $c_0 = 1$,
\begin{equation}\label{eq:ProximalOU_MeanCov}
m_k = \frac{1}{(1+\eta\alpha)^k}\bm 1 \text{ \,\,and\,\, } c_{k} = \frac{1}{(1+\eta \alpha)^{2k}}\big(1 - \frac{1}{\alpha}\big) + \frac{1}{\alpha}\,.
\end{equation}
We have the following proposition.

\begin{proposition}\label{prop:OU_Proximal}
    Consider the Proximal Sampler~\eqref{eqs:ProximalSampler} for $\nu^X = \N(0, \frac{1}{\alpha}I)$ with $\rho_0^X = \N(\bm 1, I)$ and $\eta > 0$.
    Then, $\KL(\rho_k \dvert \nu^{\eta}) = \cO(d\alpha (1+\eta \alpha)^{-2k})$.
\end{proposition}

\begin{proof}
    Recall from~\eqref{eq:ProximalOU_MeanCov} that $\rho_k^X = \N(m_k, c_kI)$ with 
\begin{equation*}
m_k = \frac{1}{(1+\eta\alpha)^k}\bm 1 \text{ \,\,and\,\, } c_{k} = \frac{1}{(1+\eta \alpha)^{2k}}\big(1 - \frac{1}{\alpha}\big) + \frac{1}{\alpha}\,.
\end{equation*}
Therefore, using the formula for KL divergence between two multivariate Gaussians, we get
\[
\KL(\rho_k^X \dvert \nu^X) = \frac{d}{2}\left[ \frac{\alpha}{(1+\eta\alpha)^{2k}} + \frac{\alpha-1}{(1+\eta\alpha)^{2k}} -\log \Big( 1+ \frac{\alpha-1}{(1+\eta\alpha)^{2k}}  \Big) \right].
\]
This simplifies to reveal that 
\begin{equation*}
    \KL(\rho_k \dvert \nu^{\eta}) = \cO(d\alpha (1-\eta \alpha)^{-2k}).
\end{equation*}

\end{proof}

\paragraph{Conclusion} From Corollary~\ref{cor:PhiConvergenceAlongLangevin}, Proposition~\ref{prop:OU_ULA}, and Proposition~\ref{prop:OU_Proximal}, we can see that the rates of convergence of KL divergence along the continuous-time Langevin dynamics, ULA, and the Proximal Sampler are $\left(d\alpha \exp(-2\alpha t)\right)$, $\left(d\alpha(1-\alpha \eta)^{2k}\right)$, and $\left(d\alpha (1 + \alpha \eta)^{-2k}\right)$ respectively.

\bibliographystyle{alpha}
\bibliography{refs}

\end{document}